\documentclass[pdflatex,sn-mathphys-num]{sn-jnl}

\usepackage{graphicx}%
\usepackage{multirow}%
\usepackage{amsmath,amssymb,amsfonts}%
\usepackage{amsthm}%
\usepackage{mathrsfs}%
\usepackage[title]{appendix}%
\usepackage{xcolor}%
\usepackage{textcomp}%
\usepackage{manyfoot}%
\usepackage{booktabs}%
\usepackage{algorithm}%
\usepackage{algorithmicx}%
\usepackage{algpseudocode}%
\usepackage{listings}%
\usepackage[bb=boondox]{mathalpha}
\usepackage{mathtools, amssymb}
\usepackage{xparse}
\usepackage{xcolor}%
\usepackage{tikz}%

\newcommand{\real}{\mathbb{R}}

\newcommand{\cB}{\mathcal{B}}

\newcommand{\cL}{\mathcal{L}}

\newcommand{\NN}{\mathbb{N}}

\DeclareFontFamily{U}{ntxmia}{}
\DeclareFontShape{U}{ntxmia}{m}{it}{<-> ntxmia }{}
\DeclareFontShape{U}{ntxmia}{b}{it}{<-> ntxbmia }{}
\DeclareSymbolFont{lettersA}{U}{ntxmia}{m}{it}
\SetSymbolFont{lettersA}{bold}{U}{ntxmia}{b}{it}

\makeatletter
\newcommand*{\rom}[1]{\expandafter\@slowromancap\roMannumeral #1@}
\makeatother

\ExplSyntaxOn
\NewDocumentCommand{\varmathbb}{m}
 {
  \tl_map_inline:nn { #1 }
  {
    \use:c { varbb##1 }
  }
 }
\tl_map_inline:nn { ABCDEFGHIJKLMNOPQRSTUVWXYZ }
 {
  \exp_args:Nc \DeclareMathSymbol{varbb#1}{\mathord}{lettersA}{\int_eval:n { `#1+67 }}
 }
\exp_args:Nc \DeclareMathSymbol{varbbk}{\mathord}{lettersA}{169}
\ExplSyntaxOff

\newcommand{\opT}{{\varmathbb{T}}}

\theoremstyle{thmstyleone}%
\newtheorem{theorem}{Theorem}
\newtheorem{proposition}[theorem]{Proposition}%

\theoremstyle{thmstyletwo}%
\newtheorem{fact}{Fact}
\newtheorem{assum}{Assumption}
\theoremstyle{thmstylethree}%
\newtheorem{corollary}{Corollary}
\newtheorem{lemma}{Lemma}%

\definecolor{auditgreen}{RGB}{0,120,60}
\definecolor{auditred}{RGB}{190,0,0}

\begin{document}

\title[Article Title]{M-duality in fixed-point problems for general normed spaces}


\author[1]{\fnm{Jongmin} \sur{Lee}}\email{jongmlee@engineering.upenn.edu}\equalcont{These authors contributed equally to this work.}

\author[2]{\fnm{Sangyeop} \sur{Woo}}\email{wsy030818@snu.ac.kr}
\equalcont{These authors contributed equally to this work.}

\author[3]{\fnm{Ernest K.} \sur{Ryu}}\email{eryu@math.ucla.edu}

\affil[1]{\orgdiv{Department of Electrical and Systems Engineering}, \orgname{University of Pennsylvania}}

\affil[2]{\orgdiv{Department of Mathematical Sciences}, \orgname{Seoul National University}}

\affil[3]{\orgdiv{Department of Mathematics}, \orgname{UCLA}}



\abstract{Solving fixed-point problems by reducing residual bounds is one of the central approaches in fixed-point iteration, and, in particular, the Halpern iteration is known to effectively reduce the residual. Recently, a new type of duality theorem has been discovered in the convex optimization and fixed-point problem literature and establish a certain symmetry in the convergence behavior of algorithms that are dual to each other. However, these duality theorems rely on a Lyapunov-style proof structure and mostly focus on Hilbert spaces. In this work, we first establish an M-duality theorem for Mann iterations with Lipschitz operators in general normed spaces, assuming only mild conditions on the coefficients of the iterations. Moreover, leveraging this M-duality, we derive new algorithms that attain a strictly smaller worst-case residual bound than that of the Halpern iteration, which was previously known as the state-of-the-art method in general normed spaces.


}

\keywords{Fixed point problem, duality, residual bound, general normed spaces, Lipschitz operator, optimal transport metrics}

\maketitle
\section{Introduction}\label{sec1}
Given an $\ell$-Lipschitz operator $\varmathbb{T}: \cB \rightarrow \cB$, where $\ell \in (0,1]$, $\cB$ is a nonempty convex domain in a normed space $(X, \|\cdot\|)$, and nonnegative scalars
$\{m^n_i\}_{0 \le i \le n \le N}$, the \emph{Mann iteration} \cite{Mann1953mean} is
\[
x^{n} = \sum^{n}_{i=0} m^n_i \opT x^{i-1},
\qquad \forall\, n=0,\dots,N,
\]
where $\opT x^{-1}=x^0 \in \cB$,  $\sum^n_{i=0}m^n_i=1$, $m^n_n>0$, $\forall 0\le n \le N$, and $N$ is the total iteration count. The Mann iteration is a general class of fixed-point iterations that aims to find a fixed point of the operator by reducing the residual $\|\opT x-x\|$, one of the commonly used performance measures \cite{leustean2007rates, matsushita2017convergence, cominetti2014rate, lee2024accelerating, lee2025multi,d2025, cc2023, bravo2026minimax, bravo2026krasnosel, krasnosel1955two, baillon1992optimal, bc2018}. In particular, we consider the following assumptions on the coefficients of the Mann iteration:
\begin{assum}\label{assum:montone}
For all $0\le i<j\le N$, $m^{j}_i \le m^{j-1}_i$.
\end{assum}
\begin{assum}\label{assum:sim}
For all $0 \le i < j\le N$, $\sum^{j-1}_{l=i}m^j_l \le m^i_i$.
\end{assum}
\noindent These assumptions have been studied in the literature on fixed-point iterations in general normed spaces \cite{bc2018, bravo2021universal, cc2023, bravo2026minimax, bravo2026krasnosel} and are considered mild in the sense that the coefficients of widely studied fixed-point iterations, such as the Halpern iteration \cite{lieder2021convergence, ss2017, pr2022, bravo2026minimax} and KM iteration \cite{krasnosel1955two, baillon1992optimal, bc2018}, are often chosen to satisfy these assumptions.

The optimal transport metrics framework \cite{baillon1992optimal} was developed as a tool for analyzing residual bounds of fixed-point iterations in general normed spaces. Roughly speaking, by interpreting differences between iterates generated by fixed-point iterations as solutions to recursive optimal transport problems and constructing worst-case operators, this framework provides tight bounds on the residual when the Lipschitz factor $\ell$ and coefficients $\{m^n_i\}_{1 \le i \le n \le N}$ are given \cite{bravo2021universal, cc2023, bravo2026minimax}. Therefore, utilizing this optimal transport metrics framework, we can consider the \emph{worst-case residual bound} of the given Mann iteration as follows:
\[
R(\{m^n_i\}_{1 \le i \le n \le N}, \ell)
\triangleq
\sup_{\substack{
\opT\in\cL_\ell,\; x^0\in\cB\\
x^n=\sum^n_{i=0}m^n_i\,\opT x^{i-1},\;\forall\,0\le n\le N
}}
\frac{\|x^N-\opT x^N\|}{\kappa},
\]
where $\cL_\ell$ denotes the class of Lipschitz operators on arbitrary normed spaces and $\kappa=\max_{-1\le i,j\le N}\|\opT x^i-\opT x^j\|$.
This optimal transport bound is universal in the sense that it depends only on the coefficients $\{m^n_i\}_{0\le i \le n \le N}$ and the Lipschitz coefficient $\ell$, and not on the particular map $\opT$ or the initial point $x^{0}$. Moreover, the optimal transport plan obtained by solving the corresponding optimal transport problem could provide a tight analytic bound for a given Mann iteration \cite{bc2018, cc2023, bravo2026minimax, bravo2026krasnosel}.

Recently, several new duality theorems have been discovered that reveal a certain symmetry in the convergence behavior of dual algorithms: H-duality theorems \cite{kim2023time, yoon2024optimal} for gradient descent methods in Euclidean space and fixed-point iterations in a Hilbert space, and the mirror duality theorem \cite{kim2023mirror} for mirror descent methods in non-Euclidean spaces. Specifically, assuming a Lyapunov-style proof structure, \cite{kim2023time, kim2023mirror} establish one-to-one correspondences between first-order algorithms that reduce function values and gradient norms, and \cite{yoon2024optimal} shows that two fixed-point iterations that are H-dual to each other exhibit a common residual bound. Although these duality theorems open a new approach to analyzing the convergence of algorithms, prior duality theorems are somewhat limited as they require a specific Lyapunov-style proof structure to establish the theorem, and most works focus on Hilbert spaces.

\paragraph{Contribution} In this work, we first establish a duality theorem for the Mann iteration with an $\ell$-Lipschitz operator $\opT$ with $\ell \in (0,1]$ in general normed spaces, which we call \emph{Mann duality} (M-duality). We consider the following \emph{M-dual Mann iteration}
\begin{align*}
x^{n+1}
={}& m^{\tiny{N-n}}_{N-n} \opT x^n+ \sum^{n-1}_{j=0}
\left(
\sum^{n-j}_{i=0} m^{N-j}_{N-n+i}
-
\sum^{n-j-1}_{i=0}m^{N-j-1}_{N-n+i}
\right)\opT x^{j}
+
\left(1-\sum^n_{i=0}m^{N}_{N-i}\right)x^0
\end{align*}
for $0 \le n \le N-1$. For simplicity, we denote the dual coefficient corresponding to $m_i^n$ by $(m_i^n)^M$ for all $0 \le i\le n \le N$. Then, we establish the following symmetry of the worst-case residual bounds between two Mann iterations in general normed spaces that are M-dual to each other.
\begin{theorem}\label{thm:m-dual}
    Under Assumptions \ref{assum:montone} and
    \ref{assum:sim},
    
    \[R(\{m^n_i\}_{0 \le i \le n \le N}, \ell)
    =
    R(\{m^n_i\}^M_{0 \le i \le n \le N}, \ell).\]
\end{theorem}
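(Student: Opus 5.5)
We plan to compute both worst-case residual bounds through the optimal transport metrics framework and show that the two recursive transport problems are transposes of each other. Assumptions \ref{assum:montone} and \ref{assum:sim} are exactly what that framework needs to produce a tight closed form. First we rewrite the Mann iteration in ``difference form.'' Let $y^i=\opT x^{i-1}$ for $i=0,\dots,N+1$ (so $y^0=x^0$). Then each $x^n$ is a probability measure $\pi_n=\sum_{i\le n} m^n_i\delta_i$ on the index set $\{0,\dots,N+1\}$. By the framework of \cite{baillon1992optimal, bravo2021universal, cc2023}, $R(\{m^n_i\},\ell)/\,$ equals the value $d_{N,N+1}$ of the recursion
\[
d_{i,j}=\min_{\gamma\in\Pi(\pi_{i-1},\pi_{j-1})}\sum_{a,b}\gamma_{a,b}\,\ell\, d_{a,b}\wedge 1,
\]
where $d$ is taken between point masses, together with $d_{0,\cdot}$, and the optimum is attained by the worst-case operator built on $\ell_\infty$-type spaces. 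We will cite this tightness: the supremum in the definition of $R$ equals this metric value, where $R$ is normalized by $\kappa$.

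Second, we use the two assumptions to pin down an explicit optimal transport plan. Under Assumption \ref{assum:montone}, the monotone plan that keeps common mass $\min(m^{j-1}_k,m^{j}_k)=m^{j}_k$ in place and moves only the surplus is optimal. Assumption \ref{assum:sim} guarantees the surplus can be routed without crossing, as in \cite{bc2018, cc2023}. This yields a closed form for $R$ as a max-plus/linear expression in partial sums $S^j_{i}=\sum_{l\ge i}m^j_l$ (tail sums) and powers of $\ell$. Concretely, we expect $R=\sum$ over a path in the triangular grid $\{(i,j)\}$ of increments $(S^{j}_{i}-S^{j-1}_{i})$ weighted by $\ell^{\text{steps}}$, capped by $1$.

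Third, we verify that the M-dual coefficients are exactly those whose tail sums are the anti-transpose of the original ones. Telescoping the definition gives
\[
\sum_{j\le k}(m^{n+1}_{j+1})^M=\sum_{i=0}^{n-k} m^{N-k}_{N-n+i},
\]
so the partial-sum array of the dual scheme equals that of the primal under $(i,n)\mapsto(N-n,N-i)$, with the $x^0$ coefficient absorbing the remainder. We then check that the dual coefficients again satisfy Assumptions \ref{assum:montone} and \ref{assum:sim}; these assumptions should be mapped to each other (monotonicity in $j$ $\leftrightarrow$ the simultaneous-sum condition) under the reflection. Hence the same closed form applies to the dual. Substituting shows that the recursion for the dual is the recursion for the primal with the roles of row and column indices reversed. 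Since the transport cost array $d_{a,b}$ is symmetric and the recursion is invariant under this reflection, the final values $d_{N,N+1}$ coincide.

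The main obstacle is the second step: establishing that the explicit monotone plan is optimal at every level of the recursion, uniformly in the nonlinear truncation $\wedge 1$ and the factor $\ell$. We would first attempt this by induction on $j$, using the triangle-inequality property of the $d_{a,b}$ (they form a metric) and the fact that under the assumptions $d_{a,b}$ is monotone in $|a-b|$ along rows. Then a non-crossing plan can be shown optimal by a standard exchange argument. If that fails, we would instead prove the equality of the two recursions directly, by a bijection between feasible plans of the primal and the transposed plans of the dual, avoiding closed forms altogether.
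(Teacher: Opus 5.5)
Your Step~3 is where the argument breaks, and it is the heart of the theorem. You conclude that after the reflection $(i,j)\mapsto(N-j,N-i)$ the dual recursion \emph{is} the primal one, so the distance arrays, and hence the final values, coincide. This is false: the M-dual preserves the terminal residual but not the array $d_{i,j}$.

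Take $N=2$, $\ell=1$ and the Halpern weights $m^1_1=\frac12$, $(m^2_0,m^2_1,m^2_2)=(\frac13,0,\frac23)$. The dual rows are $(\frac13,\frac23)$ and $(\frac13,\frac16,\frac12)$, and both arrays satisfy Assumptions~\ref{assum:montone} and~\ref{assum:sim}. Solving the small transport problems gives $(d_{0,1},d_{0,2},d_{1,2})=(\frac12,\frac23,\frac{5}{12})$ for the primal and $(\frac23,\frac23,\frac13)$ for the dual. Both residuals equal $\frac{11}{18}$, yet no relabelling of indices carries one array to the other.

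The symmetry $d_{a,b}=d_{b,a}$ does not help, because the M-dual is an anti-transpose that reverses time. It turns the forward recursion, in which each $d_{i,j}$ is built from $d_{k-1,l-1}$ with smaller indices, into a backward one. A reflected primal quantity can therefore only match a dual quantity defined by a \emph{backward} recursion. The anti-transposition of tail sums is a statement about the coefficients alone, so it does not by itself pass through the nested minimizations.

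The missing idea is an adjoint identity between two \emph{different} recursions. Fix the explicit plans $z^{i,j}$ of Proposition~\ref{lem:sim}. On one side, $R_N=m^N_0+\sum_{i<j}c_{i,j}f_{i,j}$, where $f_{i,j}=m^i_0-m^j_0$ and $c$ solves the backward recursion $c_{i,j}=\ell\sum_{l>j}\sum_{k=i+1}^{l-1}z^{k,l}_{i+1,j+1}c_{k,l}$ with $c_{n,N}=\ell m^N_{n+1}$ (Lemma~\ref{lem:diff-rec}). On the other side, the quadrangle terms solve the forward recursion $\Delta^2 d_{i,j}=\sum_{k,l}\Delta^2 z^{i,j}_{k,l}\,\ell\,\Delta^2 d_{k-1,l-1}$.

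The decisive computation is $\Delta^2 z^{N-l+1,N-k+1}_{N-j+1,N-i+1}=(z^{i,j}_{k,l})^M$, i.e.\ the primal's second-differenced plan is the dual's plain plan. This gives $\ell\Delta^2 d_{i,j}=(c_{N-j,N-i})^M$ (Proposition~\ref{lem:quad_dual}). Then telescope $\ell d_{i,N}=\sum_{l>i}\sum_{k\le i}\ell\Delta^2 d_{k,l}$ inside $R_N=m^N_0+\sum_i m^N_{i+1}\ell d_{i,N}$; with $(f_{N-l,N-k})^M=\sum_{r=k+1}^{l}m^N_r$, this resums to $(R_N)^M$.

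Your proposal has neither the second differences nor the adjoint coefficients. The path-sum closed form of your Step~2 is only conjectured. Even granting such an expansion (its weights are products of plan entries and powers of $\ell$), matching primal and dual paths requires exactly this pairing of $z$ with $\Delta^2 z$. Your fallback fails for the same reason: a transposed dual plan is not a feasible primal plan, since the dual problems have marginals, such as $(\frac13,\frac23)$ above, that occur in no primal problem.

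Smaller points:
\begin{itemize}
\item Under the M-dual each assumption maps to itself, not to the other; e.g.\ $(m^j_i)^M-(m^{j-1}_i)^M=m^{N-i+1}_{N-j+1}-m^{N-i}_{N-j+1}$.
\item Your telescoped identity should sum over $j\ge k$.
\item Optimality of the explicit plan, which you call the main obstacle, is the comparatively standard part (cf.\ Fact~\ref{sim_opt} and the dual certificate in Proposition~\ref{lem:sim}).
\end{itemize}
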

\noindent
As our M-duality theorem clearly states, compared to prior duality theorems, our theorem does not require any Lyapunov proof structure and holds in general normed spaces. We believe that this simplicity comes from the fact that the optimal transport metric framework relies mainly on the triangle inequality and properties of norms, whereas prior analyses involve more complicated calculations through inner products in Hilbert spaces. 

To briefly describe the proof sketch, we first define the \emph{consecutive quadrangle term} $\Delta^2 d_{i,j}$ within the optimal transport metric framework and explicitly characterize $\Delta^2 d_{i,j}$ and an optimal transport plan by induction, as in Proposition~\ref{lem:sim}. Then, focusing on the first-order term in Proposition~\ref{lem:sim} and its coefficients $c_{i,j}$, we establish a new formula for the residual bound and a recursive relation for $c_{i,j}$. Notably, we identify an M-dual symmetry between $c_{i,j}$ and $\Delta^2 d_{i,j}$, as established in Proposition~\ref{lem:quad_dual}, and  based on these two key propositions, we prove our M-duality theorem.

Lastly, leveraging this M-duality theorem, we identify new accelerated methods that  exhibit a faster theoretical convergence rate than the Halpern iteration, which was previously known as the state-of-the-art method in this setting.

\subsection{Preliminaries}
\noindent\textbf{Notation.} Write $(X, \|\cdot\|)$ for a normed space. Write $\cB$ for a nonempty convex domain in $(X, \|\cdot\|)$.
An operator $\opT:\cB\rightarrow\cB$ is $\ell$-Lipschitz if $\|\opT x-\opT y\| \le \ell\|x-y\| $  for $\ell \in (0,1]$. For a given $\ell \in (0,1]$, define the class of Lipschitz operators as 
\[\cL_\ell =\{\opT:\cB\rightarrow\cB \,|\, \|\opT x-\opT y\| \le \ell\|x-y\|,  \forall x,y \in \cB, \cB\subset X, (X, \|\cdot\|) \text{ is a normed space\,}\}.\] 

 \vspace{0.2in}

\noindent\textbf{Fixed-point iteration.}
Given an operator $\opT: \cB \rightarrow \cB$, the classical Banach fixed-point theorem \cite{banach1922operations} states that if $\opT$ is contractive ($\ell<1$), $X$ is complete, and $\cB$ is closed, then a fixed point exists ($\opT x^\star- x^\star=0$). The following \emph{Picard iteration}
\[x^{n}=\opT x^{n-1}\qquad\text{ for } n=1,2,\dots,\]
converges to the unique fixed point of $\opT$ at a linear rate. If $\opT$ is nonexpansive ($\ell=1$) but not contractive, as in the case of a rotation operator, Picard iteration may not converge to a fixed point. In such cases, one may use \emph{Krasnosel'ski\u\i-Mann iteration} \cite{Mann1953mean, krasnosel1955two} (KM iteration)
\[ x^{n}=m^n x^{n-1}+ (1-m^{n}) \opT x^{n-1}
\qquad\text{ for } n=1,\dots,N\]
where $\{m^n\}_{n \in \mathbf{N}} \subset [0,1]$, or \emph{Halpern iteration} \cite{halpern1967fixed}
\[ x^{n}= (1-m^n) x^0+ m^{n} \opT x^{n-1}
\qquad\text{ for } n=1,\dots, N\]
where $x^0$ is an initial point and $\{m^n\}_{n \in \mathbf{N}}  \subset  [0,1]$ to improve convergence.

More generally, given an operator $\opT: \cB \rightarrow \cB$, $N \in \NN$, and a triangular array of averaging scalars $\{m^n_i\}_{0\le i \le n \le N} \subset [0,1]$
satisfying $ \sum_{i=0}^{n}m_i^n=1 \text{ for} \, 0\le n\le N,$ 
consider the \emph{Mann iteration}
\[
x^n=\sum_{i=0}^n m_i^n\,\opT x^{i-1},
\qquad n=0,\dots,N.
\]
Here, $x^0\in\cB$ and as a notational convention, we write $\opT x^{-1}=x^0$. Obviously, this Mann iteration includes Halpern iteration and KM iteration.

\vspace{0.2in}

\noindent\textbf{Optimal transport metrics framework and a tight residual bound.}
Following the analysis in \cite{baillon1992optimal, aygen1995optimal, bc2018, bravo2021universal, cc2023, bravo2026minimax, bravo2026krasnosel},
we introduce a nested family of optimal transport problems that will allow us to derive tight bounds for the residuals when an array of averaging scalars
$\{m^n_i\}_{0\le i \le n \le N}$ is given.

    Let $\mathcal{F}_{i,j}$ denote the set of transport plans from $\{m_k^i\}_{k=0,\dots,i}$ to $\{m_l^j\}_{l=0,\dots,j}$. That is, $\mathcal{F}_{i,j}$ consists of arrays $\{z_{k,l}\}_{0\le k\le i,\ 0\le l\le j}$ satisfying $z_{k,l}\ge0$ for all $k,l$ and
\begin{align*}
    \mbox{$\sum^j_{l=0} z_{k,l}$} = m^i_k &\mbox{ for all } k=0,\dots, i\\[1.5ex]
        \mbox{$\sum^i_{k=0} z_{k,l}$} = m^j_l & \mbox{ for all } l=0,\dots, j.
\end{align*}
    
Given $\ell \in (0,1]$ and $N\in \NN$, we define
\[R_N\triangleq\sum^N_{n=0} m^N_n \ell d_{n-1,N}\] where $\{d_{i,j}\}_{\{0\le i,j \le N\}}$ is defined recursively by the nested family of optimal transport problems
\[ d_{i,j} \triangleq \min_{z \in \mathcal{F}_{i,j}}\sum^i_{k=0}\sum^j_{l=0}  z_{k,l}\, \ell d_{k-1,l-1} \quad \forall\, 0\le i,j\le N\] 
starting with $d_{-1,i}=d_{i,-1}=1/\ell$ for all $0 \le i\le N$ and $d_{-1,-1}=0$. Also, define
\[\kappa=\max_{-1\le i,j\le N }{\|\opT x^i\!-\opT x^j\|}.\] 
Note that if $\opT$ has a fixed point, i.e., $x^*\!=\opT x^*\!$, one can show inductively that $\|x^n\!-x^*\|\leq\|x^0\!-x^*\|$ and $\kappa \le (1\!+\!\ell)\|x^0\!-x^*\|$. If $\opT$ has a bounded range, then $\kappa \le \|x^0\!-\opT x^0\|+\text{diam}(\opT(\cB))$. Also, if $\cB$ is bounded, then $\kappa \le \text{diam}(\cB)$.
This a priori estimate $\kappa$ will be used to establish tight upper bounds of the form $\|x^n\!-\opT x^n\|\leq\kappa\,R_n$ for the fixed-point residuals.

We now introduce several facts that show the relationship between the optimal transport metrics framework and the residual bound of a given Mann iteration.

\begin{fact}[Proposition~9, \cite{bravo2026minimax}]\label{fact:1}
 Let $\{x^n\}_{0\le n\le N}$ be a sequence generated by Mann iteration with $\opT\in\cL_\ell$ and coefficients $\{m^n_i\}_{0 \le i \le n \le N}$. Then, $\|x^i-x^j\|\leq \kappa\,d_{i,j}$ and $\|\opT x^N-x^N\|\le \kappa\,R_N$ for all $0 \le i,j\le N$.
\end{fact}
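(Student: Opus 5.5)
We prove the first bound, $\|x^i-x^j\|\le\kappa\,d_{i,j}$ for all $0\le i,j\le N$, by strong induction on $i+j$, over the extended index range $-1\le i,j\le N$. We read $x^{-1}$ through the convention $\opT x^{-1}=x^0$, so the natural objects in the recursion are the images $\opT x^{k-1}$.

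\emph{Base cases.} The boundary values are matched to $\kappa$ as follows.
\begin{itemize}
\item For the pair $(-1,-1)$ the quantity is $\|\opT x^{-1}-\opT x^{-1}\|=0=\ell\,d_{-1,-1}\kappa$.
\item For the mixed pairs, the definition of $\kappa$ as a maximum over $-1\le i,j\le N$ gives $\|\opT x^{k}-\opT x^{-1}\|=\|\opT x^k-x^0\|\le\kappa=\kappa\,\ell\,d_{k,-1}$, since $\ell d_{k,-1}=1$.
\end{itemize}
So the statement we really carry is $\|\opT x^{k-1}-\opT x^{l-1}\|\le\kappa\,\ell\,d_{k-1,l-1}$ for all $0\le k,l\le N+1$. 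For $k,l\ge1$ it follows from $\|x^{k-1}-x^{l-1}\|\le\kappa d_{k-1,l-1}$ and $\ell$-Lipschitzness. The boundary cases with $k=0$ or $l=0$ are covered by the definition of $\kappa$, as above.

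\emph{Inductive step.} Fix $0\le i,j\le N$ and any transport plan $z\in\mathcal F_{i,j}$. Using the marginal constraints, we write
\[
x^i-x^j=\sum_{k=0}^i m^i_k\opT x^{k-1}-\sum_{l=0}^j m^j_l\opT x^{l-1}=\sum_{k=0}^i\sum_{l=0}^j z_{k,l}\bigl(\opT x^{k-1}-\opT x^{l-1}\bigr).
\]
The triangle inequality together with $z_{k,l}\ge0$ then gives
\[
\|x^i-x^j\|\le\sum_{k,l}z_{k,l}\,\|\opT x^{k-1}-\opT x^{l-1}\|\le\kappa\sum_{k,l}z_{k,l}\,\ell\,d_{k-1,l-1}.
\]
This uses the hypothesis for the pairs $(k-1,l-1)$, whose index sum is strictly less than $i+j$, or the base case when an index equals $-1$. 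Minimizing over $z\in\mathcal F_{i,j}$ yields $\|x^i-x^j\|\le\kappa d_{i,j}$. The minimum is attained because $\mathcal F_{i,j}$ is a nonempty compact polytope: the product plan $z_{k,l}=m^i_k m^j_l$ lies in it.

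\emph{Residual bound.} For the second claim we write
\[
\opT x^N-x^N=\sum_{n=0}^N m^N_n\bigl(\opT x^N-\opT x^{n-1}\bigr),
\]
which holds because $\sum_n m^N_n=1$. The triangle inequality gives
\[
\|\opT x^N-x^N\|\le\sum_n m^N_n\|\opT x^N-\opT x^{n-1}\|.
\]
For $n\ge1$ each term is at most $\ell\|x^N-x^{n-1}\|\le\kappa\,\ell\,d_{n-1,N}$ by the first part. For $n=0$ it is at most $\kappa=\kappa\,\ell\,d_{-1,N}$. Summing, we get $\kappa R_N$.

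The main obstacle is bookkeeping rather than analysis. The induction must be set up on pairs including index $-1$, and the boundary values $d_{-1,\cdot}=1/\ell$ have to be matched exactly to the definition of $\kappa$. We also note that the induction on $i+j$ is well founded, since every pair $(k-1,l-1)$ that appears has a smaller index sum.
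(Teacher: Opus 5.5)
Your proof is correct and follows the same route as the paper's sketch. You decompose $x^i-x^j$ through a transport plan in $\mathcal{F}_{i,j}$, apply the triangle inequality, bound the interior terms by Lipschitzness plus the inductive hypothesis and the boundary terms by $\kappa=\kappa\,\ell\,d_{-1,\cdot}$, minimize over plans, and handle the residual the same way. Your strong induction on $i+j$ makes precise the induction that the paper's sketch leaves informal.
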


The following calculation gives a proof sketch of Fact~\ref{fact:1}.
\begin{align*}
\|x^{i}-x^{j}\|
&=\Bigl\|\sum^i_{k=0}m^i_k\,\opT x^{k-1}-\sum^j_{l=0}m^j_l\,\opT x^{l-1}\Bigr\|\\
&= \Bigl\| \sum_{k=0}^{i}\sum_{l=0}^{j} z_{k,l}\bigl(\opT x^{k-1}-\opT x^{l-1}\bigr) \Bigr\| \\
&\le  \sum_{k=0}^{i}\sum_{l=0}^{j} z_{k,l}\bigl\|\opT x^{k-1}-\opT x^{l-1}\bigr\|\\
&\le \begin{aligned}[t]
&\sum_{k=1}^{i}\sum_{l=1}^{j} z_{k,l}\, \ell \|x^{k-1}-x^{l-1}\|+\sum_{k=1}^{i} z_{k,0}\kappa+\sum_{l=1}^{j} z_{0,l}\kappa
\end{aligned}
\end{align*}
where the second equality follows from the transport plan $\{z_{k,l}\}_{0\le k\le i,\ 0\le l\le j}$ from $\{m^i_{k}\}_{k=0,\dots, i}$ to $\{m^j_l\}_{l=0,\dots, j}$,
the first inequality follows from the triangle inequality for the norm,
and the second inequality follows from the Lipschitz continuity of \(\opT\) and the definition of $\kappa$.
Then, by induction, \(\|x^{i-1}-x^{j-1}\| \le \kappa d_{i-1,\,j-1}\), we obtain
\begin{align*}
\|x^{i}-x^{j}\|
&\le \sum_{k=0}^{i}\sum_{l=0}^{j} z_{k,l}\,\ell \kappa d_{k-1,\,l-1}
\end{align*}
Then, the definition of $d_{i,j}$ yields the desired result. By a similar argument,
\begin{align*}
\|x^N-\opT x^N\|
= \left\|\sum^N_{n=0}m^N_n(\opT x^{n-1}-\opT x^N) \right\|
\le \kappa R_N.
\end{align*}

Moreover, the following fact shows that this bound is tight by constructing a worst-case operator.
\begin{fact}[Theorem~10, \cite{bravo2026minimax}]\label{fact:tightLip}
    For every $\kappa>0$, $\ell \in (0,1]$, and $\{m_i^n\}_{0\le i\le n\le N}$, there is an operator $\opT\in\cL_\ell$ on a normed space $(X, \|\cdot\|)$ and a corresponding Mann sequence $\{x^n\}_{0\le n\le N}$ such that $\|x^i-x^j\|=\kappa d_{i,j}$ and $\|\opT x^i-x^i\|=\kappa R_i$ for all $0\le i,j\le N$.
\end{fact}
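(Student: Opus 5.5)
The plan is to realize the nested optimal transport problems exactly, as distances in a Kantorovich--Rubinstein (Arens--Eells) space over a finite metric space. In such a space the norm of a difference of two convex combinations of Dirac masses equals the optimal transport cost between them. Then I extend $\opT$ from the finitely many iterates to the whole space by a McShane-type argument. By homogeneity (scale all distances) it suffices to treat $\kappa=1$.

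\emph{Step 1: $d$ is a bounded pseudometric.} By induction on $i+j$ I will show three things for all $-1\le i,j\le N$:
\begin{itemize}
\item[(a)] $d_{i,j}=d_{j,i}$ and $d_{i,i}=0$; the diagonal plan $z_{k,k}=m^i_k$ gives cost $\sum_k m^i_k\,\ell d_{k-1,k-1}=0$.
\item[(b)] $d_{i,j}\le d_{i,r}+d_{r,j}$.
\item[(c)] $\ell d_{i,j}\le 1$ for $i\neq j$.
\end{itemize}
For (b), I glue an optimal plan in $\mathcal F_{i,r}$ with one in $\mathcal F_{r,j}$ (the gluing lemma for couplings of finite measures). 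This gives a plan in $\mathcal F_{i,j}$ whose cost is at most the sum, by the inductive triangle inequality for $\ell d_{\cdot-1,\cdot-1}$. The cases involving the index $-1$ use $d_{-1,k}=1/\ell$ together with (c). For (c), the inductive hypothesis gives $\ell d_{k-1,l-1}\le 1$, hence $d_{i,j}\le\sum z_{k,l}=1$, and $1\le 1/\ell$ because $\ell\le 1$.

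Define the finite pseudometric space $M=\{p_{-1},\dots,p_N\}$ with $\rho(p_k,p_l)=\ell d_{k,l}$. Note that $\rho(p_{-1},p_k)=1$ for $k\ge0$ and $\rho(p_{-1},p_{-1})=0$. After identifying points at distance $0$, $M$ is a metric space.

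\emph{Step 2: the sequence.} Let $X_0$ be the Arens--Eells space of $M$: finitely supported signed measures of total mass $0$, plus a base point, with the Kantorovich--Rubinstein norm. By Kantorovich duality, the norm of $\sum a_k\delta_{p_k}-\sum b_l\delta_{p_l}$, for probability weights $a,b$, is $\min_{z}\sum z_{k,l}\rho(p_k,p_l)$ over couplings $z$ of $a$ and $b$. Set $u_k=\delta_{p_k}$, which is intended to be $\opT x^k$, with $u_{-1}=x^0=\delta_{p_{-1}}$. Set $x^n=\sum_i m^n_i u_{i-1}$.

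By the duality formula and the definition of $d$, $\|x^i-x^j\|=\min_{z\in\mathcal F_{i,j}}\sum z_{k,l}\,\ell d_{k-1,l-1}=d_{i,j}$. The only coupling with a Dirac mass is the product coupling, so $\|x^i-u_i\|=\sum_n m^i_n\rho(p_{n-1},p_i)=\sum_n m^i_n\,\ell d_{n-1,i}=R_i$. Also $\|u_k-u_l\|=\rho(p_k,p_l)\le 1$ with equality for $(k,l)=(-1,0)$, so $\kappa=1$.

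\emph{Step 3: the operator.} On the finite set $\{x^0,\dots,x^N\}$ define $\opT x^i=u_i$. This is well defined and $\ell$-Lipschitz because $\|u_i-u_j\|=\ell d_{i,j}=\ell\|x^i-x^j\|$; in particular, $x^i=x^j$ forces $u_i=u_j$.

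To extend $\opT$ to a convex domain, isometrically embed the finite-dimensional space $\mathrm{span}\{\delta_{p_k}-\delta_{p_{-1}}\}$ into $X=\ell^\infty(S)$, with $S$ the dual unit ball (or a countable norming subset of it). Extend each coordinate by the McShane formula $f(x)=\inf_i\{f(u_i)+\ell\|x-x^i\|\}$. The result is an $\ell$-Lipschitz map $\opT:X\to X$ with $\cB=X$, which agrees with the prescribed values. The recursion $x^n=\sum_i m^n_i\opT x^{i-1}$ holds by construction, so this is a Mann sequence with the required equalities.

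The main obstacle is Step 1. The triangle inequality for $d$ via gluing is needed so that $\rho$ is a genuine metric and the Kantorovich--Rubinstein norm reproduces $d_{i,j}$ exactly rather than only bounding it. Special care is required at the boundary index $-1$, where $d_{-1,-1}=0$ but $d_{-1,k}=1/\ell$.
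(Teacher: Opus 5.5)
Your proof is correct. The paper gives no proof of this statement: it imports it as Theorem~10 of \cite{bravo2026minimax}, and only the upper bound (Fact~\ref{fact:1}) gets a sketch. So there is no in-paper argument to compare against.

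Your construction works as a self-contained proof. In the Lipschitz-free (Arens--Eells) space over $(\{-1,\dots,N\},\ell d)$, every inequality in the sketch of Fact~\ref{fact:1} holds with equality simultaneously. Kantorovich--Rubinstein duality makes the norm of a difference of two probability combinations of Dirac masses equal to the optimal transport cost, and coupling against a single Dirac mass is forced, which gives $\|x^i-u_i\|=R_i$ exactly. The coordinatewise McShane extension in $\ell^\infty(S)$ then yields a global map with the same constant $\ell$.

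What this route buys relative to the paper's Section~\ref{sec:2} machinery: you need neither Assumptions~\ref{assum:montone}--\ref{assum:sim} nor any explicit optimal plan as in Proposition~\ref{lem:sim}. This is consistent with the statement holding for every array $\{m^n_i\}$. The only structural input is that $d$ is a pseudometric on $\{-1,\dots,N\}$ with $\ell d_{i,j}\le 1$ and $\ell d_{-1,0}=1$, which you reprove by gluing instead of citing Fact~\ref{metric}. The last property also gives you $\max\|\opT x^i-\opT x^j\|=1$, so after scaling the constructed example really has a priori constant equal to $\kappa$.

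Two bookkeeping remarks.
\begin{itemize}
\item The triangle inequality involves three indices, and the gluing step uses it for the shifted triples $(k-1,s-1,l-1)$, including those containing $-1$. You should therefore run the induction on the largest index $n$, proving (a)--(c) for all indices $\le n$ at once, rather than on $i+j$.
\item You may take $S$ finite if you like. The free space over a finite metric space is polyhedral, because the unit ball of $\mathrm{Lip}_0$ is a polytope. Hence the construction already lives in some $\ell^\infty_m$.
\end{itemize}
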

In other words, optimal transport bounds \(d_{i,j}\) and
\(R_{N}\) are universal in the sense that they depend only on the coefficients $\{m^n_i\}_{0\le i \le n \le N}$ and the Lipschitz coefficient $\ell$, and not on the particular map \(\opT\) or the initial point \(x^{0}\). Hence, as mentioned in the Introduction, 
\[
R(\{m^n_i\}_{0 \le i \le n \le N}, \ell )=
\sup_{\substack{
\opT\in\cL_\ell,\ x^0\in\cB\\
x^n=\sum^n_{i=0}m^n_i\,\opT x^{i-1}\forall\,0\le n\le N
}}
\frac{\|x^N\!-\opT x^N\|}{\kappa}
\]
and $R(\{m^n_i\}_{0 \le i \le n \le N}, \ell)=R_N$. So, when $\{m^n_i\}_{0 \le i \le n \le N}$ and $\ell$ are given, we use $R_N$ and $R(\{m^n_i\}_{0 \le i \le n \le N}, \ell)$ interchangeably.

However, the preceding facts do not provide an explicit form of the optimal transport plan $z$ since they do not demonstrate how to calculate $z_{ij}$ when $\{m^n_i\}_{0\le i \le n \le N}$ and $\ell$ are given. 

\begin{fact}[Lemma~3, \cite{bravo2026minimax}]\label{metric}
    For $-1 \le i,j \le N$, $d_{i,j}$ is a metric.
\end{fact}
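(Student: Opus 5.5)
We prove Fact~\ref{metric} directly from the recursive definition of $\{d_{i,j}\}$, checking the metric axioms on the index set $\{-1,0,\dots,N\}$. The plan is a single strong induction on the largest index involved. Nonnegativity, symmetry and $d_{i,i}=0$ are routine. The only real content is the triangle inequality, which we get by \emph{gluing} two optimal transport plans.

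\textbf{Preliminary bound.} First we show $0\le d_{i,j}\le 1/\ell$ for all $-1\le i,j\le N$, by induction on $\max(i,j)$.
\begin{itemize}
\item The boundary values $0$ and $1/\ell$ satisfy the bound.
\item For $i,j\ge 0$, every term $\ell d_{k-1,l-1}$ lies in $[0,1]$ by the induction hypothesis.
\item Any $z\in\mathcal{F}_{i,j}$ has total mass $\sum_{k,l}z_{k,l}=\sum_k m^i_k=1$.
\item Hence $0\le d_{i,j}\le 1\le 1/\ell$.
\end{itemize}

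\textbf{Symmetry and zero diagonal.} Both follow by the same induction.
\begin{itemize}
\item Symmetry: $z\in\mathcal{F}_{i,j}$ if and only if its transpose $z^\top\in\mathcal{F}_{j,i}$. Together with $d_{k-1,l-1}=d_{l-1,k-1}$ (induction), the two minimization problems have the same value.
\item Zero diagonal: for $d_{i,i}$ take the diagonal plan $z_{k,k}=m^i_k$ and $z_{k,l}=0$ for $k\ne l$. It lies in $\mathcal{F}_{i,i}$ and gives cost $\sum_k m^i_k\,\ell d_{k-1,k-1}=0$ by induction, so $d_{i,i}=0$ since $d\ge0$.
\end{itemize}

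\textbf{Triangle inequality: setup.} We prove $d_{i,j}\le d_{i,p}+d_{p,j}$ for all triples with $\max(i,j,p)\le n$, by induction on $n$.

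Triples containing $-1$ are handled by the preliminary bound:
\begin{itemize}
\item $p=-1$: for $i,j\ge0$ the inequality reads $d_{i,j}\le 2/\ell$, which holds. If $i=-1$ or $j=-1$ as well, the inequality is immediate from the boundary values.
\item $i=-1$ (or $j=-1$) with $p\ge0$: the inequality reads $1/\ell\le 1/\ell+d_{p,j}$ (or a trivial variant when $j=-1$), which holds.
\end{itemize}

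\textbf{Triangle inequality: gluing step.} For $i,j,p\ge0$ we glue plans.
\begin{itemize}
\item Let $z\in\mathcal{F}_{i,p}$ and $w\in\mathcal{F}_{p,j}$ be optimal plans.
\item Define
\[
u_{k,l}=\sum_{q:\,m^p_q>0}\frac{z_{k,q}\,w_{q,l}}{m^p_q}.
\]
\item If $m^p_q=0$, then $z_{k,q}=0$ and $w_{q,l}=0$ for all $k,l$, because the plans are nonnegative with marginal $m^p_q$. So dropping those $q$ loses no mass.
\item Summing over $l$ gives $\sum_q z_{k,q}=m^i_k$, and summing over $k$ gives $\sum_q w_{q,l}=m^j_l$. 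Hence $u\in\mathcal{F}_{i,j}$.
\end{itemize}

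The indices $k-1,q-1,l-1$ are all at most $n-1$. So the induction hypothesis gives
\[
d_{k-1,l-1}\le d_{k-1,q-1}+d_{q-1,l-1}.
\]
Therefore
\[
d_{i,j}\le\sum_{k,l}u_{k,l}\,\ell d_{k-1,l-1}\le\sum_{k,q,l}\frac{z_{k,q}w_{q,l}}{m^p_q}\,\ell\bigl(d_{k-1,q-1}+d_{q-1,l-1}\bigr).
\]
Summing out $l$ in the first part and $k$ in the second, using the marginal $m^p_q$ of each plan, this equals
\[
\sum_{k,q}z_{k,q}\,\ell d_{k-1,q-1}+\sum_{q,l}w_{q,l}\,\ell d_{q-1,l-1}=d_{i,p}+d_{p,j}.
\]

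The only delicate point is organizing the induction. It must run on $\max(i,j,p)$ over all triples simultaneously, because the gluing step invokes the triangle inequality for the shifted triple $(k-1,q-1,l-1)$, which may contain $-1$. That is why the boundary cases are settled first via the preliminary bound $d\le 1/\ell$.
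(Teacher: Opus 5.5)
Your triangle-inequality argument is correct. The paper gives no proof of this fact; it imports it as the cited Lemma~3. Your route is the standard one behind such statements: $d_{i,j}$ is an optimal-transport (Kantorovich) cost over the lower-index values $\ell d_{k-1,l-1}$, so the triangle inequality propagates by induction through gluing of optimal plans. Your separate treatment of triples containing $-1$, via the bound $0\le d_{i,j}\le 1$ for $i,j\ge 0$, is correct and genuinely needed, because the shifted triples $(k-1,q-1,l-1)$ can contain $-1$.

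What is missing is the remaining metric axiom. You establish nonnegativity, symmetry, $d_{i,i}=0$ and the triangle inequality, which makes $d$ a pseudometric, but you never show $d_{i,j}>0$ for $i\neq j$. This does not follow from what you wrote. Your argument never uses the standing hypothesis $m^n_n>0$, and without it the claim fails: $m^1_1=0$ gives $d_{0,1}=m^1_1=0$.

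The fix is one more induction on $j$, proving $d_{i,j}>0$ for all $-1\le i<j$.
\begin{itemize}
\item If $i=-1$, then $d_{-1,j}=1/\ell>0$.
\item If $i\ge 0$, every $z\in\mathcal{F}_{i,j}$ must ship the mass $m^j_j>0$ of column $l=j$ out of rows $k\le i$. Each such row has $k-1\le i-1<j-1$, so by the induction hypothesis at stage $j-1$,
\[
\sum_{k,l} z_{k,l}\,\ell d_{k-1,l-1}\;\ge\;\sum_{k=0}^{i} z_{k,j}\,\ell d_{k-1,j-1}\;\ge\;\ell\, m^j_j \min_{0\le k\le i} d_{k-1,j-1}\;>\;0 .
\]
\end{itemize}
Since this bound holds for every feasible plan, the minimum is positive, and symmetry covers $i>j$.

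This gap is harmless for the paper's own use of the fact. The proof of Proposition~\ref{lem:sim} invokes only the triangle inequality and symmetry, which your pseudometric version already supplies.
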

\begin{fact}[Lemma~4 \cite{bravo2026minimax}]\label{sim_opt}
    For all $0\leq i \leq j$, there is an optimal transport plan for $d_{i,j}$ such that $z_{k,k}=\min\{m_k^i,m_k^j\}$.
\end{fact}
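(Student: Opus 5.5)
The statement immediately above is Fact~\ref{sim_opt}: for $0\le i\le j$, some optimal plan for $d_{i,j}$ puts mass $z_{k,k}=\min\{m^i_k,m^j_k\}$ on each diagonal pair $(k,k)$. I would prove it by an exchange (uncrossing) argument. The argument uses only that the cost matrix $c_{k,l}=\ell d_{k-1,l-1}$ is a pseudo-metric, with $c_{k,k}=0$ and the triangle inequality. This holds by Fact~\ref{metric} on $\{-1,\dots,N\}$, including the boundary conventions $d_{-1,-1}=0$ and $d_{-1,i}=1/\ell$.

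\textbf{Setup.} The set $\mathcal{F}_{i,j}$ is a nonempty compact polytope; the product plan $z_{k,l}=m^i_k m^j_l$ shows it is nonempty. So an optimal plan exists. Among all optimal plans, I choose $z$ maximizing $\sum_k z_{k,k}$, which is possible by compactness and linearity. Since each row sum is $m^i_k$ and each column sum is $m^j_k$, we always have $z_{k,k}\le\min\{m^i_k,m^j_k\}$. Here $m^i_k$ is taken as $0$ for $k>i$; the indices $k\le i$ are the relevant ones.

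\textbf{Exchange step.} Suppose that for some $k$ we have $z_{k,k}<\min\{m^i_k,m^j_k\}$. Then row $k$ carries some mass off the diagonal, say $z_{k,l}>0$ with $l\neq k$. Column $k$ likewise carries mass off the diagonal, say $z_{p,k}>0$ with $p\neq k$. Let $\varepsilon=\min\{z_{k,l},z_{p,k}\}>0$ and modify the plan as follows:
\[
z_{k,l}\mathrel{-}=\varepsilon,\quad z_{p,k}\mathrel{-}=\varepsilon,\quad z_{k,k}\mathrel{+}=\varepsilon,\quad z_{p,l}\mathrel{+}=\varepsilon.
\]
Row and column marginals are preserved and all entries stay nonnegative, so the new plan is still in $\mathcal{F}_{i,j}$. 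The cost changes by
\[
\varepsilon\,\ell\,\bigl(d_{k-1,k-1}+d_{p-1,l-1}-d_{k-1,l-1}-d_{p-1,k-1}\bigr).
\]
This is $\le 0$: the first term vanishes, and the triangle inequality together with symmetry (Fact~\ref{metric}) gives $d_{p-1,l-1}\le d_{p-1,k-1}+d_{k-1,l-1}$. So the new plan is still optimal.

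\textbf{Conclusion.} If $p=l$, the modification also increases $z_{l,l}$. In every case the diagonal total strictly increases by at least $\varepsilon$, which contradicts the maximality of $\sum_k z_{k,k}$. Hence $z_{k,k}=\min\{m^i_k,m^j_k\}$ for every $k$.

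\textbf{Main obstacle.} The delicate point is the boundary index $k=0$ (and $l=0$ or $p=0$). There the cost involves $d_{-1,\cdot}$, and we need the triangle inequality to hold for the extended table. This is exactly what Fact~\ref{metric} asserts on the full range $-1\le i,j\le N$. If one did not want to rely on that, one would check directly that $d_{a,b}\le 1/\ell$ for all $a,b\ge0$. Bounding each cost by $1/\ell$ with the convention $d_{-1,-1}=0$ then handles the cases involving $-1$.
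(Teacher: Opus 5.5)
Your proof is correct. The paper does not prove this fact at all; it imports it as Lemma~4 of \cite{bravo2026minimax}. Your uncrossing argument is the standard self-contained proof of this ``common mass stays in place'' property for metric costs. You replace the flows $p\to k$ and $k\to l$ by a direct flow $p\to l$ and keep $\varepsilon$ at $k$. The cost change is $\le 0$ because $d_{k-1,k-1}=0$ and the triangle inequality holds. Choosing, among optimal plans, one with maximal diagonal mass then turns this exchange into a contradiction.

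It is worth contrasting this with the only place the paper handles this structure itself. In Proposition~\ref{lem:sim} the fact is used only for the base case $j=1$. After that, the paper certifies via Kantorovich potentials an explicit plan whose diagonal is $z_{k,k}=m^j_k=\min\{m^i_k,m^j_k\}$. That route needs Assumptions~\ref{assum:montone} and~\ref{assum:sim} plus the inductively established quadrangle inequality, but it identifies the entire plan. Your route needs no assumptions on the coefficients and determines only the diagonal, which is all the statement asks.

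Your boundary remark is also sound. Triangles through the index $-1$ need only $0\le d_{a,b}\le 1\le 1/\ell$ for $a,b\ge 0$, since every transport cost is at most $1$ by induction. Triangles among nonnegative indices genuinely require Fact~\ref{metric}. That fact can be established by induction without using the present one, so there is no circularity.
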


Although Fact~\ref{sim_opt} from prior work \cite{bravo2026minimax} provides the explicit form of $z_{k,k}$, $z_{i,j}$ for $i\neq j$ still need to be specified.  In the next section, we derive this explicit form of $z_{i,j}$ for a given Mann iteration.

\section{Tightness of the residual bound for a Lipschitz operator }\label{sec:2}
Given  $\opT\in \cL_\ell$ with $\ell \in(0,1]$ and nonnegative averaging scalars 
$\{m^n_i\}_{0 \le i \le n \le N}$ satisfying $\sum^n_{i=0}m^n_i=1, m^n_n>0$,
consider the \emph{Mann iteration} 
 \[x^{n} = \mbox{$\sum^{n}_{i=0}  m^n_i\opT x^{i-1}$},    \qquad\forall\, n=0,\dots,N, \]
where $\opT x^{-1}=x^0 \in \cB$ and $N$ is the total iteration count. As mentioned in the Introduction, we consider the following assumptions on the coefficients.

\vspace{0.2in}

\noindent\textit{Assumption 1} (Monotonicity) For all $0 \le i < j \le N$, $m^j_i \le m^{j-1}_i$.

\vspace{0.1in}

\noindent\textit{Assumption 2} (Simple optimal transport) For all $0 \le i < j \le N$, $\sum^{j-1}_{l=i}m^j_l \le m^i_i$.

\vspace{0.2in}

Note that Assumption \ref{assum:sim} is automatically satisfied if $1/2 \le m^i_i$ for all $0 \le i \le N$.

Also, as explained in the preliminaries,
\begin{align*}
  &R(\{m^n_i\}_{0 \le i \le n \le N}, \ell ) = \sum^N_{n=0} m^N_n \ell d_{n-1,N},\\
  &d_{i,j} =\min_{z \in \mathcal{F}_{i,j}}\sum^i_{k=0}\sum^j_{l=0}  z_{k,l}\, \ell d_{k-1,l-1} \quad \forall\, 0\le i,j\le N,
\end{align*} 
where $d_{-1,i}=d_{i,-1}=1/\ell$ for all $0 \le i\le N$ and $d_{-1,-1}=0$. However, to actually compute the $R(\{m^n_i\}_{0 \le i \le n \le N}, \ell ) $ when $\{m^n_i\}_{0 \le i \le n \le N}$ and $\ell$ are given, we should solve optimal transport problems in the definition of $d_{i,j}$ and obtain the values of $z_{k,l}$.  

To establish this optimal transport plan, we define the \emph{consecutive quadrangle term} $\Delta^2 d_{i,j}$, which plays a crucial role in proving the tight bound and M-duality:
\[\Delta^2 d_{i,j} =\Delta d_{i,j} - \Delta d_{i,j-1} \qquad  \text{for all} \,\,0 \le i< j\le N,\]
where 
\[\Delta d_{i,j} = d_{i,j} - d_{i-1,j} \qquad  \text{for all} \,\,0 \le i \le j\le N.\]
Then, showing that $\Delta^2 d_{i,j} \ge 0$ for all $0 \le i< j\le N$ is equivalent to proving the \emph{convex quadrangle inequality} \cite{aygen1995optimal, bravo2021universal} as the following fact shows.
\begin{fact}[Lemma~5.1 \cite{bravo2021universal}]
    $d_{i,l}+d_{j,k} \le d_{i,k}+d_{j,l}$ for all $-1\le i\le j\le k \le l \le N$ if and only if $\Delta^2 d_{i,j} \ge 0$ for all $0 \le i< j\le N$.
\end{fact}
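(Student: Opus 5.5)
The fact is stated for $d$ defined on indices $-1,\dots,N$, and I will prove it as a purely combinatorial statement about an array $d_{i,j}$ using telescoping. I will rely on symmetry $d_{i,j}=d_{j,i}$ (Fact~\ref{metric}) and on $d_{i,i}=0$, to reduce to the region $i<j$ where $\Delta^2 d_{i,j}$ is defined. The key identity is the telescoping double sum: for $-1\le i\le j\le k\le l\le N$,
\[
d_{i,k}+d_{j,l}-d_{i,l}-d_{j,k}=\sum_{p=i+1}^{j}\sum_{q=k+1}^{l}\Delta^2 d_{p,q},
\]
which follows from $\sum_{p=i+1}^{j}\Delta d_{p,q}=d_{j,q}-d_{i,q}$ and then telescoping in $q$. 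Since $p\le j\le k<q$, every term has $p<q$, so only the defined quadrangle terms appear. (Note $p\ge 0$ because $p\ge i+1\ge 0$, and $\Delta d_{p,q}$ with $p\le q$ is defined.)

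\emph{Sufficiency.} If $\Delta^2 d_{p,q}\ge 0$ for all $0\le p<q\le N$, the right-hand side above is a sum of nonnegative terms. Hence $d_{i,l}+d_{j,k}\le d_{i,k}+d_{j,l}$. When $i=j$ or $k=l$ the sum is empty and the inequality is an equality, which is consistent.

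\emph{Necessity.} Given $0\le p<q\le N$, choose $i=p-1$, $j=p$, $k=q-1$, $l=q$. Then $-1\le i\le j\le k\le l\le N$ holds, using $p\le q-1$. The double sum collapses to the single term $\Delta^2 d_{p,q}$, and the assumed inequality gives $\Delta^2 d_{p,q}\ge 0$.

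The only delicate point is index bookkeeping at the boundary. The case $i=-1$ uses $d_{-1,\cdot}$, which is part of the definition of $\Delta d_{0,q}=d_{0,q}-d_{-1,q}$. In the necessity step with $q=p+1$, we get $k=j$, so the term $d_{j,k}=d_{p,p}=0$ enters. This is harmless: the telescoping identity is purely algebraic, and the defining relations $\Delta d_{p,q}=d_{p,q}-d_{p-1,q}$ are only ever invoked with $p\le q$.
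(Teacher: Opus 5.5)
Your proof is correct: the double-telescoping identity $d_{i,k}+d_{j,l}-d_{i,l}-d_{j,k}=\sum_{p=i+1}^{j}\sum_{q=k+1}^{l}\Delta^2 d_{p,q}$ gives sufficiency immediately, and the choice $(i,j,k,l)=(p-1,p,q-1,q)$ gives necessity. This is the standard argument for the cited result; the paper itself gives no proof and only cites Lemma~5.1 of \cite{bravo2021universal}. One minor remark: your appeals to symmetry and to $d_{p,p}=0$ are never actually used, because every entry of $d$ that appears has first index at most its second index, so the equivalence holds for an arbitrary array.
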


The inequality $d_{i,l}+d_{j,k} \le d_{i,k}+d_{j,l}$ for all $-1\le i\le j\le k \le l \le N$ is called the convex quadrangle inequality, which is studied as a necessary condition in the optimal transport metric framework for establishing the tightness of the bound
\cite{bravo2021universal, cc2023, bc2018, aygen1995optimal}. For the case $\ell=1$, this inequality was proved by \cite{bravo2021universal, aygen1995optimal}.

We now introduce our first key proposition which proves the quadrangle inequality for $\ell\in(0,1]$ and furthermore guarantees a simple optimal transport plan. Although this result can be proved by extending the technique used in the nonexpansive case $\ell=1$ to the contractive case, as noted in \cite{bravo2026krasnosel}, we present an alternative proof that explicitly computes the recursive formulas of the consecutive quadrangle term $\Delta^2 d_{i,j}$. 

\begin{proposition}\label{lem:sim}
    Under Assumptions \ref{assum:montone} and \ref{assum:sim},  for all $0 \le i<j\le N$, $\Delta^2 d_{i,j} \ge 0$ and
    \[d_{i,j} = \sum^{j-1}_{k=i+1}m^j_k \ell d_{i-1,k-1}+(m^i_i-\sum^{j-1}_{k=i}m^j_k)\ell d_{i-1,j-1} +\sum^{i-1}_{k=0}(m^i_k-m^j_k) \ell d_{k-1,j-1} .\]
\end{proposition}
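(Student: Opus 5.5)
We argue by strong induction on $j$ (equivalently on $i+j$), proving simultaneously two claims for all pairs below the current level: (a) the displayed formula for $d_{i,j}$ holds, i.e.\ the plan described next is optimal, and (b) $\Delta^2 d_{i,j}\ge 0$.

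\textbf{The candidate plan.} For $i<j$ we take the plan $z\in\mathcal{F}_{i,j}$ given by:
\begin{itemize}
\item $z_{k,k}=m^j_k$ for $k<i$; this is the choice of Fact~\ref{sim_opt}, since $m^j_k\le m^i_k$ by Assumption~\ref{assum:montone}.
\item The excess $m^i_k-m^j_k$ at each $k<i$ is sent to the last column $j$.
\item The mass $m^i_i$ is split: the portion $m^j_k$ goes to each column $k=i,\dots,j-1$, and the remainder $m^i_i-\sum_{k=i}^{j-1}m^j_k\ge 0$ (nonnegative by Assumption~\ref{assum:sim}) goes to column $j$.
\end{itemize}
The marginals check directly. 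Column $j$ receives $\sum_{k<i}(m^i_k-m^j_k)+m^i_i-\sum_{k=i}^{j-1}m^j_k=1-\sum_{k<j}m^j_k=m^j_j$. The cost of this plan is exactly the right-hand side of the proposition; the $k=i$ term $m^j_i\ell d_{i-1,i-1}=0$ drops out. It remains to show optimality.

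\textbf{Optimality via a dual certificate.} Since the cost matrix $c_{k,l}=\ell d_{k-1,l-1}$ is built from earlier $d$'s, the inductive hypothesis (b) gives the Monge/quadrangle property through the Fact from \cite{bravo2021universal}. That fact applies to indices below the current level, and the factor $\ell$ does not affect it. Concretely we have
\[
c_{k,l}+c_{k',l'}\le c_{k,l'}+c_{k',l}\quad\text{for } k\le k'\le l\le l',
\]
together with the metric property of Fact~\ref{metric}.

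I would certify optimality by exhibiting dual potentials $u_k,v_l$ with $u_k+v_l\le c_{k,l}$, and equality on the support of $z$. A natural choice is $v_l=c_{i,l}$ for $l\le j-1$, with $u_i=0$, and then $u_k=c_{k,j}-v_j$ for $k<i$, where $v_j=c_{i,j}$.

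The diagonal entries $z_{k,k}$, $k<i$, then require $c_{k,j}-c_{i,j}+c_{i,k}=0$ on the support. Since this need not hold exactly, I would instead use a cyclic-monotonicity (exchange) argument. Any competitor plan can be transformed into ours by $2\times 2$ swaps. Each swap does not decrease the cost, by the quadrangle inequality or by the triangle inequality $c_{k,l}\le c_{k,m}+c_{m,l}$ combined with $c_{m,m}=0$. This is the standard argument that, for a metric cost, common mass stays in place (Fact~\ref{sim_opt}), and that the remaining mass, which moves "forward" from rows $\le i$ to columns $\ge i$, is matched optimally by the quadrangle-ordered (north-west type) rule.

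\textbf{The step $\Delta^2 d_{i,j}\ge 0$.} With the formula in hand, we compute $\Delta d_{i,j}=d_{i,j}-d_{i-1,j}$ and $\Delta d_{i,j-1}$ by applying the formula to each of the three pairs. We then subtract and group terms by coefficient. The differences reduce to combinations of the form $\ell\,\Delta^2 d_{\cdot,\cdot}$ at strictly earlier indices, weighted by nonnegative coefficients such as $m^{j-1}_k-m^j_k\ge0$ (Assumption~\ref{assum:montone}) and the Assumption~\ref{assum:sim} slacks $m^i_i-\sum_{k=i}^{j-1}m^j_k$. Nonnegativity then follows from the induction hypothesis. The base cases use $d_{-1,\cdot}=1/\ell$ and $i=0$, where the formula becomes $d_{0,j}=\sum_{k=1}^{j-1}m^j_k+(1-\sum_{k=0}^{j-1}m^j_k)\cdot 1=1-m^j_0$, and these are checked directly.

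The main obstacle is this telescoping bookkeeping for $\Delta^2 d_{i,j}$, in particular making the boundary terms $k=i$ and $k=j-1$ line up so that every coefficient is visibly nonnegative. The optimality exchange argument is secondary but also needs care.
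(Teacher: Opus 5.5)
Your transport plan and its marginal and cost checks match the paper's. However, neither half of the inductive step is actually proved, and the optimality half as written points the wrong way.

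\textbf{Optimality.} The paper certifies optimality with the dual certificate you abandoned. Yours failed only because you also set $v_l=c_{i,l}$ for columns $l<i$. The constraint at $(k,k)$, $k<i$, then reads $c_{k,j}+c_{i,k}-c_{i,j}\le 0$, and the triangle inequality makes the left side $\ge 0$. So those potentials are infeasible, not merely non-tight.

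Let the diagonal support dictate the potentials instead. In your convention $u_k+v_l\le c_{k,l}$, take $u_k=c_{k,j}-c_{i,j}$ and $v_k=-u_k$ for $k\le i$, and $v_l=c_{i,l}$ for $i<l\le j$. Every support entry of the plan is then tight. Feasibility for $k,l\le i$ is $c_{k,j}-c_{l,j}\le c_{k,l}$, by Fact~\ref{metric}. For $k\le i<l\le j$ it is $c_{k,j}+c_{i,l}\le c_{k,l}+c_{i,j}$, which is the quadrangle inequality on indices $\le j-1$, i.e.\ the induction hypothesis.

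The exchange argument you use instead is only asserted, and it rests on a reversed inequality. The convex quadrangle inequality says nested pairs are cheaper than crossing ones: $c_{k,l'}+c_{k',l}\le c_{k,l}+c_{k',l'}$ for $k\le k'\le l\le l'$. With your direction, swaps push toward sending early rows to early columns (the north-west rule in natural order). Your plan instead sends every row $k<i$ to the last column $j$. Also, a swap toward your plan must not \emph{increase} the cost, not ``not decrease'' it.

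Your route can be repaired. Fact~\ref{sim_opt} plus Assumption~\ref{assum:montone} reduce the problem to rows $\{0,\dots,i\}$ against columns $\{i+1,\dots,j\}$. There the correct inequality is the Monge property once the columns are listed in decreasing order, and the north-west corner rule in that order is your plan. This has to be stated and proved, though.

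\textbf{The step $\Delta^2 d_{i,j}\ge 0$.} This is the substantive part of the proposition, and you only predict its outcome. Subtracting the formula at $(i,j),(i-1,j),(i,j-1),(i-1,j-1)$ produces raw $d$-values, including $d_{-1,\cdot}$ and diagonal ones. It is not automatic that these regroup into terms $\ell\Delta^2 d_{a,b}$ with $0\le a<b\le j-1$ and nonnegative weights.

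The paper does this with two summations by parts. First, $\Delta d_{i,j}=\sum_{k\ge 1}\sum_{l}\Delta z^{i,j}_{k,l}\,\ell\,\Delta d_{k-1,l-1}$ with $\Delta z^{i,j}_{k,l}=\sum_{s\ge k}(z^{i,j}_{s,l}-z^{i-1,j}_{s,l})$; the $k=0$ term vanishes by equality of column marginals. Second, $\Delta^2 d_{i,j}=\sum_{k,l\ge 1}\Delta^2 z^{i,j}_{k,l}\,\ell\,\Delta^2 d_{k-1,l-1}$. Here $\Delta^2 z^{i,j}_{k,l}$ is the tail sum over $s\ge k$, $t\ge l$ of the mixed difference of the four plans, and the $l=0$ term vanishes by equality of row marginals.

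Explicit evaluation then gives the weights:
\begin{itemize}
\item at $(i,l)$ with $i\le l\le j-1$: $\sum_{t=i}^{l-1}(m^{j-1}_t-m^j_t)$;
\item at $(k,j)$ with $1\le k\le i-1$: $\sum_{s=0}^{k-1}(m^{i-1}_s-m^i_s)$;
\item at $(i,j)$: the Assumption~\ref{assum:sim} slack $m^i_i-\sum_{t=i}^{j-1}m^j_t$;
\item zero elsewhere, in particular on the diagonal, which keeps the undefined terms $\Delta^2 d_{k-1,k-1}$ out.
\end{itemize}
Without this computation the induction does not close. Since the optimality step at the next level needs exactly this quadrangle inequality, both halves of your argument depend on it.
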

\begin{proof}
We use induction on $j$. When $j=1$, Fact~\ref{sim_opt} gives
$d_{0,1}=(m_0^0 - m_0^1)\ell d_{-1,0}=m_1^1$.
We also have $\Delta d_{0,1}=m_1^1-\frac{1}{\ell}\le 0$ because
$m_1^1\le 1\le\frac{1}{\ell}$, and
$\Delta^2 d_{0,1}=m_1^1-\frac{1}{\ell}
-(0-\frac{1}{\ell})=m_1^1\ge 0$.

Now consider $j=N$ and the following transport plan:
\[
 z^{i,j}_{k,l}
=
\begin{cases}
\displaystyle
m^j_k
&\text{if } 0 \le l=k \le i \\[10pt]

\displaystyle
m^j_l
&\text{if } k=i,\, i+1 \le l \le j-1\\[10pt]

m^i_k-m^j_k
&\text{if }   \, 0 \le k \le i-1, l=j \\[10pt]

m^i_i-\sum^{j-1}_{l=i}m^j_l
&\text{if }  k=i, \, l=j \\[10pt]

0
& \text{otherwise.}
\end{cases}
\]
Note that $z^{i,j}_{k,l}$ is nonnegative by Assumptions~\ref{assum:montone} and~\ref{assum:sim}. Following the proof in \cite{cc2023}, consider the dual problem

\begin{align*}
  d_{i,j} &=  \max_{u, v} \sum^j_{l=0}u_l m^j_l-\sum^i_{k=0}v_k m^i_k\\
    & \quad \text{s.t.} u_l-v_k \le \ell d_{k-1,l-1} \qquad (0\le k\le i,\ 0\le l\le j)
 \end{align*}
and the following dual solution:
\begin{align*}
             &u_k=v_k=1-\ell d_{k-1,j-1}           \qquad k=0,\dots, i
             \\&u_l=1-\ell d_{i-1,j-1}+\ell d_{i-1,l-1}           \qquad  l=i+1,\dots, j
\end{align*}
For $k,l \le i$, the triangle inequality (Fact~\ref{metric}) gives
\[u_l-v_k = \ell d_{k-1,j-1}-\ell d_{l-1,j-1} \le \ell d_{k-1,l-1}.\]
For $k \le i <l \le j$, the induction hypothesis gives
\[u_l-v_k =\ell d_{i-1,l-1}- \ell d_{i-1,j-1}+\ell d_{k-1,j-1} \le \ell d_{k-1,l-1}.\]
Therefore, the proposed dual solution is feasible. By substituting the primal and dual solutions, we can verify that strong duality holds and that $z^{i,j}$ is an optimal transport plan.

Next, we prove that $\Delta d_{i,j} \le 0$ and $\Delta^2d_{i,j} \ge 0$.
By the preceding optimal transport formula, we have
\[
d_{i,j} =  \sum_{l=0}^{j} \sum_{k=0}^{i} z^{i,j}_{k,l} \ell d_{k-1,l-1}.
\]
This implies
\begin{align*}
    d_{i,j}-d_{i-1,j} = \sum_{l=0}^{j} \sum_{k=0}^{i} (z^{i,j}_{k,l}-z^{i-1,j}_{k,l}) \ell d_{k-1,l-1}
\end{align*}
where $z^{i-1,j}_{i,l}=0$ for all $l$.
For each $l$, by considering the change of basis from $\{d_{k,l}\}^{i-1}_{k=-1}$ to $\{d_{k,l}-d_{k-1,l}\}^{i-1}_{k=0}, d_{-1,l}$, we obtain
\[
\Delta d_{i,j} =  \sum_{l=0}^{j} \sum_{k=1}^{i} \Delta z^{i,j}_{k,l} \ell \Delta d_{k-1,l-1} + \sum_{l=0}^{j} \Delta z^{i,j}_{0,l}  \ell d_{-1,l-1}
\]

where
\[
\Delta z^{i,j}_{k,l}
=
\sum_{s=k}^{i} \left( z^{i,j}_{s,l} - z^{i-1,j}_{s,l} \right).
\]

Using the values of $z_{k,l}$, we can calculate \(\Delta z^{i,j}_{k,l}\) as follows:
\[
\Delta z^{i,j}_{k,l}
=
\begin{cases}
\displaystyle
m^i_i-\displaystyle\sum_{t=i}^{j-1} m^{j}_{t}
&\text{if } k=i,  l = j \\[10pt]

\displaystyle
\sum_{s=k}^{i-1}\left(m^{i}_s-m^{i-1}_s\right)+m^i_i
&\text{if } 1 \le k \le i-1,\ l=j\\[10pt]

m^j_{l}
&\text{if }  k=i, \, i \le l \le j-1 \\[10pt]

0
& \text{otherwise.}
\end{cases}
\]
Case (i). The case $k=i, l=j$ follows directly.

\noindent Case (ii). For $1 \le k \le i-1$ and $l=j$, we have
\[\sum_{s=k}^{i} z^{i,j}_{s,j} =\sum^{i-1}_{s=k}m^i_s - \sum^{i-1}_{s=k}m^j_s+m^{i}_i-\sum^{j-1}_{t=i}m^j_t=\sum^{i}_{s=k}m^i_s - \sum^{j-1}_{s=k}m^j_s.\] 
Thus, we obtain $\Delta z^{i,j}_{k,l}=\sum^{i}_{s=k}m^i_s-\sum^{i-1}_{s=k}m^{i-1}_s$.

\noindent Case (iii). The case $k=i$ and $i \le l \le j-1$ follows directly.

\noindent Case (iv). Otherwise, the corresponding terms coincide for $l\le i-1$, while for $i\le l\le j-1$, the differences $-m_l^j$ at $s=i-1$ and $m_l^j$ at $s=i$ cancel. The case $k=0,\ l=j$ follows from the equality of the column marginals. Hence, $\Delta z_{k,l}^{i,j}=0$.

Since $\Delta z^{i,j}_{0,l}=0$,  we have 
\[
\Delta d_{i,j} =  \sum_{l=0}^{j} \sum_{k=1}^{i} \Delta z^{i,j}_{k,l} \ell \Delta d_{k-1,l-1}.
\]

By Assumptions~\ref{assum:montone} and~\ref{assum:sim}, $\Delta z^{i,j}_{k,l} \ge 0$. Hence, induction yields $\Delta d_{i,j} \le 0$ for $0 < i<j$.

Then, we have 
\begin{align*}
    \Delta d_{i,j} - \Delta d_{i,j-1} =   \sum_{l=0}^{j} \sum_{k=1}^{i} (\Delta z^{i,j}_{k,l}-\Delta z^{i,j-1}_{k,l})\ell \Delta d_{k-1,l-1}
\end{align*}

where $\Delta z^{i,j-1}_{k,j}=0$ for all $k$. For each $k$, by considering the change of basis from $\{\Delta d_{k,l}\}^{j-1}_{l=-1}$ to $\{\Delta d_{k,l}-\Delta d_{k,l-1}\}^{j-1}_{l=0}, \Delta d_{k,-1}$, we obtain
\[
\Delta^2 d_{i,j} = \sum_{l=1}^{j} \sum_{k=1}^{i} \Delta^2 z^{i,j}_{k,l}\ell \Delta^2 d_{k-1,l-1} +\sum_{k=1}^{i} \Delta^2 z^{i,j}_{k,0}\ell \Delta d_{k-1,-1} 
 \]
where
\[
\Delta^2 z^{i,j}_{k,l} = \sum_{t=l}^{j} (\Delta z^{i,j}_{k,t} - \Delta z^{i,j-1}_{k,t}).
\]


Using the values of $z_{k,l}$ and $\Delta z_{k,l}$, we can calculate \(\Delta^2 z^{i,j}_{k,l}\) as follows:
\[
\Delta^2 z^{i,j}_{k,l}
=
\begin{cases}
\displaystyle
\sum_{t=i}^{l-1}\left(m^{j-1}_t-m^{j}_t\right)
&\text{if } k=i,\ i\le l \le j-1 \\[10pt]

\displaystyle
\sum_{s=k}^{i-1}\left(m^{i}_s-m^{i-1}_s\right)+m^i_i
&\text{if } 1 \le k \le i-1,\ l=j\\[10pt]

m^i_i-\displaystyle\sum_{t=i}^{j-1} m^{j}_{t}
&\text{if } k=i,\ l=j \\[10pt]

0
& \text{otherwise.}
\end{cases}
\]
Case (i). If $k=i, l=j$ or $1 \le k \le i-1, l=j$, then $\Delta^2 z^{i,j}_{k,l}= \Delta z^{i,j}_{k,j}$.

\noindent Case (ii). If $k=i$ and $i\le l \le j-1$, then $\Delta^2 z^{i,j}_{k,l}= \sum_{t=l}^{j} (z^{i,j}_{i,t}-z^{i,j-1}_{i,t})$ because $z^{i-1,j}_{i,l}=0$ for all $l$. We have \[\sum_{t=l}^{j} z^{i,j}_{i,t} =\sum^{j-1}_{t=l}m^j_t +m^{i}_i-\sum^{j-1}_{t=i}m^j_t=m^i_i -\sum^{l-1}_{s=i}m^j_s.\]
Thus, we obtain $\Delta^2 z^{i,j}_{k,l}=\sum^{l-1}_{s=i}(m^{j-1}_s-m^j_s)$.

\noindent Case (iii). (1) If $k=i$ and $0\le l \le i-1$, then $\Delta z^{i,j}_{k,l}=\Delta z^{i,j-1}_{k,l}=0$. (2) If $k \le i-1$ and $l \le j-1$, then $\sum_{t=l}^{j} \Delta z^{i,j}_{k,t}=\sum_{t=l}^{j-1} \Delta z^{i,j-1}_{k,t}=\sum_{s=k}^{i-1}\left(m^{i}_s-m^{i-1}_s\right)+m^i_i$. Therefore, $\Delta^2 z^{i,j}_{k,l}=0$.

Finally, since $\Delta^2 z^{i,j}_{k,0}=0$,  we have 
\[\Delta^2 d_{i,j} = \sum_{l=1}^{j} \sum_{k=1}^{i} \Delta^2 z^{i,j}_{k,l} \ell \Delta^2 d_{k-1,l-1}\]
where
\[ \Delta^2 z^{i,j}_{k,l} = \sum_{s=k}^{i}\sum_{t=l}^{j}
\left[z^{i,j}_{s,t}-z^{i-1,j}_{s,t}-z^{i,j-1}_{s,t}+z^{i-1,j-1}_{s,t}
\right].\]

Finally, by Assumptions~\ref{assum:montone} and~\ref{assum:sim}, $\Delta^2 z^{i,j}_{k,l}\ge 0$ for $1\le k\neq l$ and $\Delta^2 z^{i,j}_{k,k}=0$ for $1\le k$. It then follows by induction that $\Delta^2 d_{i,j}=\sum_{l=1}^{j} \sum_{k=1}^{i} \Delta^2 z^{i,j}_{k,l}\ell \Delta^2 d_{k-1,l-1}\ge 0$ for $0<i<j=N$.
The case $i=0$ follows directly from $d_{0,j}=1-m_0^j$, since $\Delta^2 d_{0,j}=m_0^{j-1}-m_0^j\ge0$ by Assumption~\ref{assum:montone}.

\end{proof}


Based on this optimal transport plan in Proposition~\ref{lem:sim} and the recursive formula of $\Delta^2 d_{i,j}$ obtained in the proof, we will prove the M-duality theorem in the next section.


\section{M-duality theorem}
\subsection{M-dual Mann iteration}
Given $\opT\in \cL_\ell$ with $\ell \in(0,1]$ and nonnegative averaging scalars 
$\{m^n_i\}_{0 \le i \le n \le N}$ satisfying $\sum^n_{i=0}m^n_i=1, m^n_n>0$, we define the \emph{M-dual Mann iteration} as
\begin{align*}
\!
x^{n+1}= \mbox{$m^{\tiny{N-n}}_{N-n}$}\opT x^n + \sum^{n-1}_{j=0} \!\left(\sum^{n-j}_{i=0} m^{N-j}_{N-n+i}- 
\sum^{n-j-1}_{i=0} \!\!m^{N-j-1}_{N-n+i} \right)\opT x^{j}  +\left(1-\!\sum^n_{i=0} m^{N}_{N-i}\right)x^0
\end{align*}
for all $0 \le n \le N-1$. Here, we replace $m^i_0$ by $1-\sum^i_{j=1}m^i_j$ for all $0\le i \le N$.

The following are examples of coefficient correspondences for $N=2,3$, where the lower triangular matrix represents the coefficients of the Mann iteration. Specifically, the entry in the $i$-th row and $j$-th column of the matrix represents the coefficient of $\opT x^{j-2}$ in the $(i-1)$-th iteration.

\vspace{0.1in}
\noindent For $N=2$:
\[
\begin{bmatrix} 1 &0 &0 \\ 1-m^1_1 &m^1_1 &0 \\ 1-m^2_1-m^2_2 &m^2_1 &m^2_2 \end{bmatrix}\quad \rightarrow \quad \begin{bmatrix} 1 &0  &0  \\  1-m^2_2 &m^2_2  &0   
\\ 1-m^2_2-m^2_1 &m^2_2+m^2_1-m^1_1  &m^1_1 
\end{bmatrix}\]
For $N=3$:
\[ \begin{bmatrix} 1 &0  &0&0  \\   1-m^1_1 &m^1_1  &0 &0  
\\ 1-m^2_1-m^2_2  &m^2_1 &m^2_2&0 
\\1-m^3_1-m^3_2-m^3_3 &m^3_1 &m^3_2&m^3_3  \end{bmatrix} \rightarrow \begin{bmatrix}   1 &0  &0&0  \\ 1-m^3_3 &m^3_3  &0 &0  
\\ 1-\sum^3_{i=2}m^3_i &\sum^3_{i=2}m^3_i-m^2_2  &m^2_2&0 
\\1-\sum^3_{i=1}m^3_i &\sum^3_{i=1}m^3_i-\sum^2_{i=1}m^2_i &\sum^2_{i=1}m^2_i-m^1_1 &m^1_1\end{bmatrix}\]




Generally, given $\{m_i^j\}_{0\le i \le j\le N}$, define
\begin{align*}
&(m_i^j)^M=\sum_{k=0}^{j-i}m_{N-j+1+k}^{N-i+1}-\sum_{k=0}^{j-i-1}m_{N-j+1+k}^{N-i}, \quad  1\le i\le j\le N,\\
&(m_0^j)^M=1-\sum_{r=1}^{j}(m_r^j)^M,\qquad 0\le j\le N.
\end{align*}
We call $(m_i^j)^M$ the dual coefficient corresponding to $m_i^j$. 
Then, $\bigl((m_i^j)^M\bigr)^M=m_i^j$, and the following lemma holds.

\begin{lemma}\label{m_dual_coef}
The coefficients of the Mann iteration $\{(m_i^j)^M\}_{0\le i \le j\le N}$ satisfy Assumptions~\ref{assum:montone} and~\ref{assum:sim} if and only if $\{m_i^j\}_{0\le i \le j\le N}$ satisfies Assumptions~\ref{assum:montone} and~\ref{assum:sim}.
\end{lemma}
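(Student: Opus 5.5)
Since the paper notes that $\bigl((m_i^j)^M\bigr)^M=m_i^j$, it suffices to prove one direction: if $\{m_i^j\}$ satisfies Assumptions~\ref{assum:montone} and~\ref{assum:sim}, then so does $\{(m_i^j)^M\}$. Applying that implication to the dual array gives the converse. My plan is to rewrite both assumptions in terms of \emph{tail sums} of the coefficients. In these variables the M-dual map should become a simple reflection of the index set, and each assumption should be visibly invariant under that reflection.

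\textbf{Step 1 (tail sums).} For $0\le a\le n\le N$ set $T(a,n)=\sum_{k=a}^{n}m^n_k$, and put $T(a,a-1)=0$. Note that $T(0,n)=1$ and $m^n_a=T(a,n)-T(a+1,n)$. In the definition of $(m_i^j)^M$, the first sum runs over indices $N-j+1,\dots,N-i+1$ with superscript $N-i+1$, and the second over indices $N-j+1,\dots,N-i$ with superscript $N-i$. Hence
\[
(m_i^j)^M=T(N-j+1,N-i+1)-T(N-j+1,N-i),\qquad 1\le i\le j\le N .
\]
Summing over $r=i,\dots,j$ telescopes, and using $T(N-j+1,N-j)=0$ we get
\[
T^M(i,j)=T(N-j+1,\,N-i+1),\qquad 1\le i\le j\le N .
\]
To make the boundary $i=0$ fit the same formula, I extend $T$ by the convention $T(a,N+1)=1$. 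This is consistent with $T^M(0,j)=1$, which holds by the normalization $(m_0^j)^M=1-\sum_{r\ge1}(m_r^j)^M$. Thus the duality is the reflection $\sigma:(a,n)\mapsto(N-n+1,\,N-a+1)$ of the index triangle.

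\textbf{Step 2 (assumptions as local inequalities in $T$).}
\begin{itemize}
\item Assumption~\ref{assum:montone} reads $m^j_i\le m^{j-1}_i$. In tail sums this is the mixed second-difference inequality
\[
T(i,j)-T(i+1,j)-T(i,j-1)+T(i+1,j-1)\le 0 .
\]
It involves a unit rectangle, with a plus sign on the diagonal corners $(i,j),(i+1,j-1)$ and a minus sign on the antidiagonal corners.
\item Assumption~\ref{assum:sim} reads $T(i,j)-T(j,j)\le T(i,i)$, i.e. $T(i,j)\le T(i,i)+T(j,j)$. For $i=0$ it is trivial, since $1-m^j_j\le 1$.
\end{itemize}
The reflection $\sigma$ sends a unit rectangle to a unit rectangle and sends diagonal corners to diagonal corners: $(i,j)\mapsto(N-j+1,N-i+1)$ and $(i+1,j-1)\mapsto(N-j+2,N-i)$. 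So the sign pattern of the Assumption~\ref{assum:montone} inequality is preserved. The reflection also maps diagonal points $(k,k)$ to diagonal points, so the Assumption~\ref{assum:sim} inequality for the dual with $1\le i<j$ becomes exactly the original inequality at the indices $(N-j+1,N-i+1)$. I would also record that nonnegativity of the dual coefficients, $(m_i^j)^M=T(N-j+1,N-i+1)-T(N-j+1,N-i)\ge0$, follows from Assumption~\ref{assum:montone}. Summing the unit-rectangle inequality in the first index up to the diagonal gives exactly this difference, using $T(a,a-1)=0$.

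\textbf{Step 3 (boundary cases, the main obstacle).} The delicate part is the edges of the triangle. These are the dual Assumption~\ref{assum:montone} with $i=0$, where $(m_0^j)^M$ is defined by normalization, and the cases $j=i+1$, where the corner $T(i+1,j-1)$ lies on the subdiagonal and equals $0$. Under $\sigma$ these correspond respectively to the extended column $T(\cdot,N+1)=1$ and to the subdiagonal again. I would verify them directly. For example, the dual version of $m_0^j\le m_0^{j-1}$ becomes $T(N-j+1,N+1)-T(N-j+2,N+1)-\bigl(T(N-j+1,N)-T(N-j+2,N)\bigr)\le0$, i.e. $-(m^N_{N-j+1})\le 0$ after using $T(\cdot,N+1)=1$. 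That holds by nonnegativity. The remaining edge cases reduce similarly to either nonnegativity, $m^n_n>0$, or the original assumptions. Checking that every edge case closes this way, without needing an extra hypothesis, is where I expect the real work.
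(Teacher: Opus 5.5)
Your proof is correct. It arrives at the same correspondences the paper uses. For $1\le i<j\le N$, dual Assumption~\ref{assum:montone} (resp.\ \ref{assum:sim}) at $(i,j)$ is the original assumption at $(N-j+1,N-i+1)$. The $i=0$ cases reduce to $m^N_{N-j+1}\ge0$ and $(m^j_j)^M=m^{N-j+1}_{N-j+1}\ge0$.

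The paper gets these by directly differencing dual coefficients, e.g.\ $(m_i^j)^M-(m_i^{j-1})^M=m^{N-i+1}_{N-j+1}-m^{N-i}_{N-j+1}$ and the telescoped $(m_0^j)^M=\sum_{r=0}^{N-j}m^N_r$. That route is shorter. Your identity $T^M=T\circ\sigma$ produces all cases uniformly from one reflection and explains why both assumptions are self-dual. It also buys something extra: because $\sigma$ is an involution, it proves $((m_i^j)^M)^M=m_i^j$. The paper only asserts this, and both converses rely on it.

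Two details need correcting.

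First, Step~3 is not a real obstacle. The conventions $T(a,a-1)=0$ and $T(\cdot,N+1)=1$ agree with $T^M=T\circ\sigma$ on every index pair that occurs. So the case $j=i+1$ is literally the original Assumption~\ref{assum:montone} at $(N-i,N-i+1)$. The two $i=0$ cases above are the only genuine edge cases, and you have already closed both.

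Second, your nonnegativity remark telescopes in the wrong direction. Summing the unit-rectangle inequality over the first index from $N-j+1$ up to the diagonal gives the upper bound $(m_i^j)^M\le m^{N-i+1}_{N-i+1}$, not a lower bound. Nonnegativity comes instead from summing from $k=0$ to $N-j$ and using $T(0,\cdot)=1$, which gives $(m_i^j)^M=\sum_{k=0}^{N-j}\bigl(m^{N-i}_k-m^{N-i+1}_k\bigr)\ge0$ by Assumption~\ref{assum:montone}. This is not part of the lemma. It is, however, what makes the dual a legitimate Mann iteration, a point the paper's proof leaves implicit.
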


\begin{proof}
Assume that the original coefficients satisfy the two assumptions. For $1\le i<j\le N$, $(m_i^j)^M-(m_i^{j-1})^M=m_{N-j+1}^{N-i+1}-m_{N-j+1}^{N-i}\le0.$
Also, telescoping the definition gives $(m_0^j)^M=\sum_{r=0}^{N-j}m_r^N$, and hence $(m_0^j)^M-(m_0^{j-1})^M=- m_{N-j+1}^N \le 0.$
Thus Assumption~\ref{assum:montone} holds. Moreover, for $1\le i<j\le N$,
\begin{align*}
(m_i^i)^M-\sum_{l=i}^{j-1}(m_l^j)^M
&=m_{N-i+1}^{N-i+1}-\left(\sum_{k=0}^{j-i}m_{N-j+1+k}^{N-i+1}-m_{N-j+1}^{N-j+1}\right)
\\&=m_{N-j+1}^{N-j+1}-\sum_{k=N-j+1}^{N-i}m_k^{N-i+1} \ge 0.    
\end{align*}
This proves Assumption~\ref{assum:sim}; the case $i=0$ follows from nonnegativity and the row-sum identity. The converse follows from $\bigl((m_i^j)^M\bigr)^M=m_i^j$.
\end{proof}

Due to Lemma~\ref{m_dual_coef}, we can apply the framework developed in Section~\ref{sec:2} to the Mann iteration with dual coefficients $\{(m_i^j)^M\}_{0\le i \le j\le N}$.

\subsection{A new formula for the residual bound}

To prove the M-duality theorem, we now introduce a new formula
for the residual bound $R_N$ which takes the form of a different recurrence relation
involving $d_{i,j}$.

First, by expanding $d_{i,j}$ using the recurrence relation in
Proposition~\ref{lem:sim}, we can express $d_{i,j}$ as a polynomial in
$\{m_k^r\}_{0\le k\le r\le j}$ and $\ell$. We define $f_{i,j}$ as the first-order
term of $d_{i,j}$. Specifically, by Proposition~\ref{lem:sim}, $(m_0^i-m_0^j)\ell d_{-1,j-1}=m_0^i-m_0^j=f_{i,j}$.

Next, consider $R_N=m_0^N+\sum_{n=1}^N m_n^N\ell d_{n-1,N}$. Expand $d_{i,j}$ terms for $i,j>0$ using Proposition~\ref{lem:sim}, and collect the $f_{i,j}$ terms. 
Repeating this procedure iteratively, we can finally express $R_N$ as $m^N_0$ plus a linear combination of $f_{i,j}$. Let $c_{i,j}$ be the coefficient of $f_{i,j}$, which should be a polynomial in $m_k^i$ and $\ell$.  Then, we have the following lemma.



 \begin{lemma}\label{lem:diff-rec}
    \[R_N = 1-\sum^N_{j=1}m^N_j+\sum_{0\le i<j\le N} c_{i,j}f_{i,j}\]
where the $c_{i,j}$ are generated by the following recurrence relation:
\[c_{i,j}=\ell\sum_{l=j+1}^{N} \sum_{k=i+1}^{l-1} z^{k,l}_{i+1,j+1}c_{k,l},\qquad 0\le i<j<N,\]and \[c_{n,N}=\ell m^N_{n+1},\qquad 0\le n<N.\]
\end{lemma}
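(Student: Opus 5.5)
Write the optimal plan of Proposition~\ref{lem:sim} uniformly as
\[d_{i,j}=\sum_{k=0}^{i}\sum_{l=0}^{j} z^{i,j}_{k,l}\,\ell d_{k-1,l-1},\]
with $z^{i,j}$ the explicit plan from that proof. Separate the boundary contribution. The entries with $k=0$ (and $l\ge1$) carry $\ell d_{-1,l-1}=1$. By the plan, the only such nonzero entry is $z^{i,j}_{0,j}=m^i_0-m^j_0=f_{i,j}$, since $z^{i,j}_{0,0}=m^j_0$ multiplies $d_{-1,-1}=0$. Every other nonzero entry has $k,l\ge1$ with $k\neq l$, and $k=l$ contributes $d_{k-1,k-1}=0$. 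This gives the exact one-step identity
\[
d_{i,j}=f_{i,j}+\ell\sum_{1\le k<l} z^{i,j}_{k,l}\,d_{k-1,l-1}+\ell\sum_{1\le l<k} z^{i,j}_{k,l}\,d_{l-1,k-1},
\]
using symmetry of $d$ (Fact~\ref{metric}). For $i<j$ only $k\le i$, $l\le j$ appear, so each $d$ on the right has strictly smaller second index. Similarly,
\[
R_N=m^N_0\ell d_{-1,N}+\sum_{n=1}^{N}m^N_n\ell d_{n-1,N}=m^N_0+\sum_{n=0}^{N-1}\ell m^N_{n+1}d_{n,N},
\]
and $m^N_0=1-\sum_{j\ge1}m^N_j$.

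The plan is a top-down unrolling, i.e.\ a "transposed" dynamic programming. Define weights $c_{i,j}$ for $0\le i<j\le N$ as the total coefficient with which $d_{i,j}$ appears when $R_N$ is expanded recursively. The only direct source of $d_{n,N}$ is the initial term, so $c_{n,N}=\ell m^N_{n+1}$. For $j<N$, the term $d_{i,j}$ arises only from expanding some $d_{k-1,l-1}$ with $l-1>j$ through the one-step identity, and it arises with coefficient $\ell z^{k-1,l-1}_{i+1,j+1}$. Reindexing $(k-1,l-1)\to(k,l)$ with $k<l$, $l\ge j+1$, and entry $(i+1,j+1)$ in the plan gives exactly the stated recurrence
\[c_{i,j}=\ell\sum_{l=j+1}^{N}\sum_{k}z^{k,l}_{i+1,j+1}c_{k,l}.\]
Each expansion of $d_{k,l}$ deposits $c_{k,l}f_{k,l}$. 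Processing pairs in decreasing order of the second index $j$, and finitely terminating at $d_{-1,\cdot}$ terms (already absorbed into $f$) or at zeros, yields $R_N=m^N_0+\sum c_{i,j}f_{i,j}$.

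It remains to check the index range $k\in[i+1,l-1]$ in the recurrence. We need $k<l$ because only pairs with first index less than second are kept. We need $k\ge i+1$ because $z^{k,l}_{i+1,j+1}$ requires row index $i+1\le k$. I should also handle the entries with $l<k$ (below-diagonal entries $z^{k,l}_{s,t}$ with $s>t$) that are folded in by symmetry. For the plan in Proposition~\ref{lem:sim}, nonzero off-diagonal entries with $k,l\ge1$ occur only at $(k,l)=(s,t)$ with $s\le i<t$ or $t=j$, so they always have row index less than column index. The symmetric case therefore does not occur, and the recurrence as stated captures everything.

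The main obstacle is this bookkeeping. I must confirm from the explicit case list of $z^{i,j}$ that no entry with row index greater than column index and both at least $1$ is nonzero. I must also confirm that the pair $(i+1,j+1)$ with $i<j$ lands in the row-less-than-column region. Once that is verified, the identity follows by induction on $N-j$.
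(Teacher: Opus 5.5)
Your proposal is correct and follows the paper's argument: you read off $c_{n,N}=\ell m^N_{n+1}$ from $R_N$, note that $d_{i,j}$ enters the one-step expansion of $d_{k,l}$ with weight $\ell z^{k,l}_{i+1,j+1}$, and eliminate the $d$-terms by backward induction on the second index. Your check that every nonzero off-diagonal plan entry lies above the diagonal makes explicit what the paper leaves implicit. One small slip is in the case $i=0$, where the boundary row of $z^{0,j}$ is $z^{0,j}_{0,l}=m^j_l$ for $1\le l\le j$ (so $z^{0,j}_{0,j}=m^j_j$, not $1-m^j_0$); the row sum is still $1-m^j_0=f_{0,j}$, so $d_{0,j}=f_{0,j}$ and nothing else changes.
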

\begin{proof}
    By definition of $R_N$, $c_{n,N}=\ell m^N_{n+1}$ for $0\le n<N$. For $i<k, j<l$, observe that $f_{i,j}$ and $f_{k,l}$ appear one time in the expansions of $d_{i,j}$ and $d_{k,l}$, respectively, and $d_{i,j}$ appears $\ell z^{k,l}_{i+1,j+1}$ times in the expansion of $d_{k,l}$ by Proposition~\ref{lem:sim}. This implies that, with the definition of $c_{i,j}$, $f_{i,j}$ appears $\ell z^{k,l}_{i+1,j+1}c_{k,l}$ times in the one-step expansion of $d_{i,j}$ by Proposition~\ref{lem:sim}. Summing these over all $k>i$ and $l>j$ gives the stated recurrence. Lastly, since $l>j$, backward induction on $j=N,N-1,\dots,1$ shows that every $d$-term is eliminated after finitely many steps and that its coefficient is transferred to the corresponding $f$-term. 
\end{proof}


We now introduce our second key proposition which reveals a surprising relationship between $\Delta^2 d_{i,j}$ in Mann iteration and $(c_{i,j})^M$ in M-dual Mann iteration.
\begin{proposition}\label{lem:quad_dual}
Under Assumptions \ref{assum:montone} and \ref{assum:sim}, for every $0\le i<j\le N$, \[\ell\Delta^2 d_{i,j}=(c_{N-j,N-i})^M.\]
\end{proposition}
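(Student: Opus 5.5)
Both sides satisfy linear recurrences, so I would prove the identity by induction, showing that the two recurrences correspond exactly under the index reflection $(i,j)\mapsto(N-j,N-i)$ combined with $m\mapsto m^M$.

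\textbf{Recurrence for the left-hand side.} From the proof of Proposition~\ref{lem:sim}, for $0<i<j$,
\[
\Delta^2 d_{i,j}=\sum_{l=1}^{j}\sum_{k=1}^{i}\Delta^2 z^{i,j}_{k,l}\,\ell\,\Delta^2 d_{k-1,l-1},
\]
where $\Delta^2 z^{i,j}_{k,l}$ is the explicit piecewise quantity computed there. Its nonzero entries are:
\begin{itemize}
\item for $k=i$ and $i\le l\le j-1$: $\sum_{t=i}^{l-1}(m^{j-1}_t-m^j_t)$;
\item for $1\le k\le i-1$ and $l=j$: $\sum_{s=k}^{i-1}(m^i_s-m^{i-1}_s)+m^i_i$;
\item for $k=i$ and $l=j$: $m^i_i-\sum_{t=i}^{j-1}m^j_t$.
\end{itemize}
The base cases are $\Delta^2 d_{0,j}=m^{j-1}_0-m^j_0$. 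I also treat $\ell\Delta^2 d_{k-1,l-1}$ with $k-1=-1$ via $d_{-1,\cdot}=1/\ell$. So $\ell\Delta^2 d_{i,j}$ is determined by its values at pairs $(k-1,l-1)$ with $k\le i$, $l\le j$.

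\textbf{Recurrence for the right-hand side.} By Lemma~\ref{lem:diff-rec}, $c_{a,b}=\ell\sum_{l>b}\sum_{a<k<l} z^{k,l}_{a+1,b+1}c_{k,l}$, with boundary values $c_{n,N}=\ell m^N_{n+1}$. Set $a=N-j$ and $b=N-i$. Then $c_{N-j,N-i}$ depends on $c_{k,l}$ with $l>N-i$ and $k>N-j$. Writing $k=N-j'$ and $l=N-i'$, the dependence is on pairs with $i'<i$ and $j'<j$, which is the same triangular dependence as the left-hand side (up to the shift by one). The plan is therefore a strong induction on $i+j$ (equivalently on $j$, then $i$), proving $\ell\Delta^2 d_{i,j}=(c_{N-j,N-i})^M$.

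\textbf{Base cases.} When $i=0$, $\ell\Delta^2 d_{0,j}=\ell(m^{j-1}_0-m^j_0)$. On the dual side, $(c_{N-j,N})^M=\ell (m^N_{N-j+1})^M$. Using $(m^j_0)^M=\sum_{r=0}^{N-j}m^N_r$, which by involution gives $m^{j}_0-m^{j-1}_0=-(m^N_{N-j+1})^M$ up to the stated telescoping, and comparing with the computation in Lemma~\ref{m_dual_coef}, I expect the two sides to agree. This needs a careful sign and index check.

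\textbf{Inductive step, and the main obstacle.} The main obstacle is matching coefficients: for every $k,l$,
\[
\Delta^2 z^{i,j}_{k,l}\big|_{m}=\big(z^{N-l+1,\,N-k+1}_{N-j+1,\,N-i+1}\big)\big|_{m^M},
\]
with the index shift chosen so that $\ell\Delta^2 d_{k-1,l-1}$ pairs with $(c_{N-l+1,N-k+1})^M$. By the formula for $z^{i,j}$ in Proposition~\ref{lem:sim}, the right-hand side is nonzero only in three situations:
\begin{itemize}
\item the ``row $k=i$'' entries $m^j_l$;
\item the ``column $l=j$'' entries $m^i_k-m^j_k$;
\item the corner entry $m^i_i-\sum m^j_l$.
\end{itemize}
These three patterns correspond to the three nonzero cases of $\Delta^2 z$ above, with rows and columns exchanged by the reflection. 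I would verify each case by substituting the definition $(m_i^j)^M=\sum_{k=0}^{j-i}m^{N-i+1}_{N-j+1+k}-\sum_{k=0}^{j-i-1}m^{N-i}_{N-j+1+k}$. For example, the corner entry should transform into $m^{N-j+1}_{N-j+1}-\sum_{k=N-j+1}^{N-i}m^{N-i+1}_k$, exactly as in the computation in Lemma~\ref{m_dual_coef}. The first-difference entries $m^{j-1}_t-m^j_t$ should telescope into single dual coefficients $(m^{\cdot}_{\cdot})^M$.

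I would also check two further points. First, the diagonal terms vanish on both sides (Proposition~\ref{lem:sim} shows $\Delta^2 z_{k,k}=0$, and $z^{k,l}_{a+1,b+1}$ only has $a<k$). Second, the boundary terms involving $d_{-1,\cdot}$ correspond to the boundary values $c_{n,N}$. Once the coefficients match term by term, the induction hypothesis closes the step.
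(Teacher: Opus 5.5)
Your proposal is correct and is essentially the paper's proof. It uses the same induction, with base case $(c_{N-j,N})^M=\ell(m^N_{N-j+1})^M=\ell(m_0^{j-1}-m_0^j)=\ell\Delta^2 d_{0,j}$, and the same inductive step matching the recurrence for $\Delta^2 d_{i,j}$ from Proposition~\ref{lem:sim} term by term with the M-dual of the recurrence in Lemma~\ref{lem:diff-rec}. Your coefficient identity $\Delta^2 z^{i,j}_{k,l}=(z^{N-l+1,N-k+1}_{N-j+1,N-i+1})^M$ is the paper's $\Delta^2 z^{N-l+1,N-k+1}_{N-j+1,N-i+1}=(z^{i,j}_{k,l})^M$ after renaming $(i,j,k,l)\mapsto(N-l+1,N-k+1,N-j+1,N-i+1)$, and the three-case substitution you deferred does check out.
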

\begin{proof}
For $1\le i<j\le N$ (the case $i=0$ is treated below), we suppress the zero-cost diagonal entries of the plan in Proposition~\ref{lem:sim} and write
\[
 z^{i,j}_{k,l}
=
\begin{cases}
\displaystyle
m^j_l
&\text{if } k=i,\, i+1 \le l \le j-1\\[10pt]

m^i_k-m^j_k
&\text{if }   \, 0 \le k \le i-1, l=j \\[10pt]

m^i_i-\sum^{j-1}_{r=i}m^j_r
&\text{if }  k=i, \, l=j \\[10pt]

0
& \text{otherwise.}
\end{cases}
\]
For $1\le k\le i$ and $k<l\le j$, substitution gives the
corresponding M-dual entries:
\[
 (z^{i,j}_{k,l})^M
=
\begin{cases}
\displaystyle
\sum^{N-l+1}_{t=N-j+1}m^{N-l+1}_{t} -\sum^{N-l}_{t=N-j+1}m^{N-l}_{t}
&\text{if } k=i,\, i+1 \le l \le j-1\\[10pt]

\sum^{N-i}_{t=N-j+1} m^{N-k}_{t}-\sum^{N-i}_{t=N-j+1} m^{N-k+1}_{t} 
&\text{if }   \, 1 \le k \le i-1, l=j \\[10pt]

m^{N-j+1}_{N-j+1}  - \sum^{N-i}_{t=N-j+1} m^{N-i+1}_{t}
&\text{if }  k=i, \, l=j \\[10pt]

0
& \text{otherwise.}
\end{cases}
\]

In the proof of Proposition~\ref{lem:sim}, we showed that
    \[
\Delta^2 z^{i,j}_{k,l}
=
\begin{cases}
\displaystyle
\sum_{t=i}^{l-1}\left(m^{j-1}_t-m^{j}_t\right)
&\text{if } k=i,\ i+1\le l \le j-1 \\[10pt]

\displaystyle
\sum_{s=k}^{i-1}\left(m^{i}_s-m^{i-1}_s\right)+m^i_i
&\text{if } 1 \le k \le i-1,\ l=j\\[10pt]

m^i_i-\displaystyle\sum_{t=i}^{j-1} m^{j}_{t}
&\text{if } k=i,\ l=j \\[10pt]

0
& \text{otherwise.}
\end{cases}
\]
It follows that
\[
\Delta^2 z^{N-l+1,N-k+1}_{N-j+1,N-i+1}
=
\begin{cases}
\displaystyle
\sum_{t=N-j+1}^{N-i}\left(m^{N-k}_t-m^{N-k+1}_t\right)
&\text{if } l=j,\ 1\le k \le i-1 \\[10pt]

\displaystyle
\sum_{s=N-j+1}^{N-l}\left(m^{N-l+1}_s-m^{N-l}_s\right)+m^{N-l+1}_{N-l+1}
&\text{if } i+1 \le l \le j-1,\ k=i\\[10pt]

m^{N-j+1}_{N-j+1}-\displaystyle\sum_{t=N-j+1}^{N-i} m^{N-i+1}_{t}
&\text{if } k=i,\ l=j \\[10pt]

0
& \text{otherwise.}
\end{cases}
\]
Comparing this formula with the expression for $(z_{k,l}^{i,j})^M$ above gives
\[\Delta^2 z_{N-j+1,N-i+1}^{N-l+1,N-k+1} = (z_{k,l}^{i,j})^M \]
for all $1\le k\le i<j\le N$ and $k<l\le j$.
The previously deferred case $i=0$ follows immediately from the
terminal condition in Lemma~\ref{lem:diff-rec} and the definition of the M-dual, since
\[(c_{N-j,N})^M = \ell(m_{N-j+1}^N)^M = \ell(m_0^{j-1}-m_0^j) = \ell\Delta^2d_{0,j}.\]
Thus, \(\ell\Delta^2 d_{i,j}\) and \((c_{N-j,N-i})^M\) satisfy the same
recurrence and agree for \(i=0\). Hence, by induction,
\[ \ell\Delta^2d_{i,j}=(c_{N-j,N-i})^M \qquad \text{for}\, 0\le i<j\le N \]
which proves the claim.

\end{proof}

\subsection{M-duality theorem}
With Propositions~\ref{lem:diff-rec} and~\ref{lem:quad_dual},  we now prove the M-duality theorem. For completeness, we restate the theorem. 

\vspace{0.2in}
\noindent\textbf{Theorem 1} (M-duality theorem) Under Assumptions \ref{assum:montone} and \ref{assum:sim}, 
\vspace{-0.1in}
    \[R(\{m^n_i\}_{0 \le i \le n \le N}, \ell)
    =
    R(\{m^n_i\}^M_{0 \le i \le n \le N}, \ell).\]
\begin{proof}
First, by definition of $d_{i,j}$ and  Proposition~\ref{lem:quad_dual}, we have
\[\ell d_{i,N}=\ell\sum_{l=i+1}^{N}\sum_{k=0}^{i}\Delta^2d_{k,l}=\sum_{l=i+1}^{N}\sum_{k=0}^{i}(c_{N-l,N-k})^M\]
where the identities $d_{-1,l}=1/\ell$ and $d_{i,i}=0$ are used in the first equality. Therefore,
\begin{align*}
R_N &=m_0^N+\sum_{i=0}^{N-1}m_{i+1}^N\ell d_{i,N}\\
&=m_0^N+\sum_{0\le k<l\le N}(c_{N-l,N-k})^M\sum_{r=k+1}^{l}m_r^N\\
&=(m_0^N)^M+\sum_{0\le k<l\le N}(c_{N-l,N-k})^M(f_{N-l,N-k})^M\\
&=(R_N)^M,
\end{align*}
where the third equality is from $(m_0^N)^M=m_0^N$ and $(f_{N-l,N-k})^M=\sum_{r=k+1}^{l}m_r^N$ and the last equality comes from Lemma~\ref{lem:diff-rec}.
\end{proof}

As discussed in the Introduction, our Theorem~\ref{thm:m-dual} is the first duality theorem for fixed-point iterations in general normed spaces. The work \cite{yoon2024optimal} considers the Hilbert-space setting, while \cite{kim2023mirror} studies mirror descent in non-Euclidean space and focuses on the relationship between function values and gradient norms. 

 
\paragraph{Relationship with H-duality}
  \cite{yoon2024optimal} expresses fixed-point iteration as
$x^{k+1} = x^k-\sum^k_{j=0} h_{k+1,j+1}(x^j-\opT x^j)$,
which is called the $h$-stepsizes form, and H-duality indicates the antitranspose of the $H$-matrix, where $H \in \real^{N\times N}$ and $H_{i,k}=h_{k+1,i}$ if $0 \le i \le k \le N-1$ and $H_{k,i}=0$ otherwise. Through simple manipulation, it can be verified that the M-duality relationship is equivalent to the H-duality relationship. In other words, these are the same transformation between fixed-point iterations, but with different representations. So following the prior work, our lemmas and theorems could therefore be written with respect to the $h$-stepsizes form. However, we choose the Mann-iteration representation because we found that it is necessary for operations in normed spaces. In particular, considering the optimal transport metrics framework and the proofs of Propositions~\ref{lem:sim} and~\ref{lem:quad_dual}, we believe that the Mann-iteration expression is a natural representation that captures the mechanism underlying the symmetry of residual bounds between Mann iterations which are M-dual to each other.

\subsection{Application of the M-duality theorem to Mann iterations}
In this subsection, we apply Theorem~\ref{thm:m-dual} to KM and Halpern iterations. Recall that KM iteration is
\[ x^{n}=m^n x^{n-1}+ (1-m^{n}) \opT x^{n-1}
\qquad\text{ for } n=1,2,\dots,N\]
Then, its dual iteration is
\[ x^{n+1}=m^{N-n} x^{n}+ (1-m^{N-n}) \opT x^{n}
\qquad\text{ for } n=0,1,\dots,N-1\]
Interestingly, the dual of KM iteration is KM iteration with the coefficients in reverse order, and by Theorem~\ref{thm:m-dual}, they have the same $R_N$.
\begin{corollary}
       Under Assumptions~\ref{assum:montone} and~\ref{assum:sim}, the KM iterations with averaging factors $\{m^i\}^N_{i=1}$ and $\{m^{N+1-i}\}^N_{i=1}$ have the same residual bound $R_N$.
\end{corollary}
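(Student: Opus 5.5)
The plan is to write the KM iteration as a Mann iteration, compute its M-dual coefficients from the definition of $(m_i^j)^M$, recognize the result as the reversed KM iteration, and then apply Theorem~\ref{thm:m-dual}.

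\textbf{Mann form of KM.} Unrolling $x^{n}=m^n x^{n-1}+(1-m^n)\opT x^{n-1}$ gives $x^n=\sum_{i=0}^n m^n_i\opT x^{i-1}$ with
\[
m^n_n=1-m^n,\qquad m^n_i=m^n m^{n-1}_i \quad (i<n),
\]
so that $m^n_i=(1-m^i)\prod_{r=i+1}^n m^r$ for $i\ge 1$, and $m^n_0=\prod_{r=1}^n m^r$. Here I take $m^0=0$, so that $m^0_0=1$.

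\textbf{Dual coefficients.} I substitute these into
\[
(m_i^j)^M=\sum_{k=0}^{j-i}m_{N-j+1+k}^{N-i+1}-\sum_{k=0}^{j-i-1}m_{N-j+1+k}^{N-i}.
\]
Write $a=N-j+1$ and $b=N-i+1$, so the first sum runs over columns $a,\dots,b$ of row $b$ and the second over columns $a,\dots,b-1$ of row $b-1$. The KM recursion gives $\sum_{t=a}^{b-1}m^b_t=m^b\sum_{t=a}^{b-1}m^{b-1}_t$. Hence
\[
(m_i^j)^M=(1-m^b)+(m^b-1)S,\qquad S=\sum_{t=a}^{b-1}m^{b-1}_t .
\]
Each partial column sum telescopes: $\sum_{t=a}^{b-1}m^{b-1}_t=1-\prod_{r=a}^{b-1}m^r$. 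Substituting,
\[
(m_i^j)^M=(1-m^{N-i+1})\prod_{r=N-j+1}^{N-i}m^r .
\]

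\textbf{Identification.} The reversed KM iteration, with factors $\tilde m^{r}=m^{N+1-r}$, has coefficients $(1-\tilde m^i)\prod_{s=i+1}^j\tilde m^s$. Reindexing $s\mapsto N+1-s$ shows these equal the expression just obtained. The column-$0$ entries then agree automatically by the row-sum normalization. Therefore the M-dual of KM with $\{m^i\}$ is exactly KM with $\{m^{N+1-i}\}$.

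\textbf{Conclusion.} Lemma~\ref{m_dual_coef} shows the reversed coefficients also satisfy Assumptions~\ref{assum:montone} and~\ref{assum:sim}, although Theorem~\ref{thm:m-dual} needs only the hypothesis on the original array. Theorem~\ref{thm:m-dual} then gives equal $R_N$.

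The main obstacle is the dual-coefficient computation, specifically keeping the boundary index conventions straight: the diagonal term $k=j-i$ in the first sum, the case $i=1$ in the first factor, and the convention $m^0=0$ used for $m^0_0=1$. I would check the formula against the $N=2$ and $N=3$ matrices displayed earlier before trusting the general telescoping.
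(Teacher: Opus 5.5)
Your proposal is correct and follows the paper's route: identify the M-dual of KM as KM with reversed factors, then invoke Theorem~\ref{thm:m-dual}. The paper only asserts this identification, whereas your explicit computation $(m_i^j)^M=(1-m^{N-i+1})\prod_{r=N-j+1}^{N-i}m^r$, with column $0$ fixed by the row sums, supplies the verification it omits.
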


Next, we apply the M-duality theorem to Halpern iteration. Recall that Halpern iteration is
\[ x^{n}= (1-m^{n})x^{0}+ m^n \opT x^{n-1}
\qquad\text{ for } n=1,2,\dots,N.\]
We call its dual the \emph{Dual-Halpern iteration}, which has also been studied in Hilbert spaces \cite{yoon2024optimal, yoon2026toward}.

\vspace{0.3in}

\textbf{Dual-Halpern iteration}
\begin{align*}
    x^{n+1} &= m^{N-n} \opT x^{n} +\sum^{n-1}_{i=0} (m^{N-i}-m^{N-i-1})  \opT x^{i}+(1-m^N)x^0
    \\&= x^{n} + m^{N-n}( \opT x^{n}- \opT x^{n-1}) \qquad\text{ for } n=0,1,\dots,N-1
\end{align*}
Although Halpern and Dual-Halpern iterations have seemingly different forms of iteration,  by the M-duality theorem, they have the same $R_N$.
\begin{corollary}
   Under Assumptions~\ref{assum:montone} and~\ref{assum:sim}, the Halpern iteration with anchoring factors $\{m^i\}^N_{i=1}$ and the corresponding Dual-Halpern iteration have the same  residual bound $R_N$.
\end{corollary}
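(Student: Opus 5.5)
The statement to prove is the last corollary: Halpern and Dual-Halpern iterations have the same residual bound $R_N$. The plan is to apply Theorem~\ref{thm:m-dual} directly. This needs three things: the Halpern iteration is a Mann iteration, its M-dual coefficients are exactly the Dual-Halpern coefficients, and Assumptions~\ref{assum:montone} and~\ref{assum:sim} hold for the Halpern coefficients (hypothesis of the corollary). Lemma~\ref{m_dual_coef} then gives the assumptions for the dual side as well, although Theorem~\ref{thm:m-dual} only needs them for the original array.

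\textbf{Step 1: write Halpern as a Mann array.} The Halpern iteration is the Mann iteration with
\[
m^n_n=m^n,\qquad m^n_0=1-m^n,\qquad m^n_i=0 \ (1\le i\le n-1),
\]
for $1\le n\le N$, and $m^0_0=1$. Here the $x^0$ term is $\opT x^{-1}$, and $m^n>0$ is needed so that $m^n_n>0$.

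\textbf{Step 2: compute the dual array.} Use $(m_i^j)^M=\sum_{k=0}^{j-i}m_{N-j+1+k}^{N-i+1}-\sum_{k=0}^{j-i-1}m_{N-j+1+k}^{N-i}$.
\begin{itemize}
\item In the first sum, the only nonzero term has lower index $N-i+1$, i.e.\ $k=j-i$, which gives $m^{N-i+1}$.
\item In the second sum, the only possible nonzero term has lower index $N-i$. That would need $k=j-i-1$, which is in range exactly when $j>i$, and it gives $m^{N-i}$.
\item The index $0$ never appears in either sum, since $N-j+1\ge1$.
\end{itemize}
Hence $(m^j_j)^M=m^{N-j+1}$ and $(m^j_i)^M=m^{N-i+1}-m^{N-i}$ for $1\le i<j$. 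The identity $(m_0^j)^M=\sum_{r=0}^{N-j}m^N_r$, noted in the proof of Lemma~\ref{m_dual_coef}, gives $(m_0^j)^M=1-m^N$.

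\textbf{Step 3: match with Dual-Halpern.} Rewrite the dual iteration with $n+1=j$. The coefficient of $\opT x^{j-1}$ is $m^{N-j+1}$, which is $m^{N-n}$ on $\opT x^n$. The coefficient of $\opT x^{i-1}$ for $i<j$ is $m^{N-i+1}-m^{N-i}$, i.e.\ $m^{N-i'}-m^{N-i'-1}$ on $\opT x^{i'}$ with $i'=i-1$. The anchor coefficient is $1-m^N$. This is precisely the first displayed form of the Dual-Halpern iteration.

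The telescoped form $x^{n+1}=x^n+m^{N-n}(\opT x^n-\opT x^{n-1})$ follows by subtracting consecutive lines. With the convention $\opT x^{-1}=x^0$ and $x^0=x^0$, the $n=0$ case reads $x^1=(1-m^N)x^0+m^N\opT x^0$, which is consistent.

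\textbf{Step 4: conclude.} Theorem~\ref{thm:m-dual} gives $R(\{m^n_i\},\ell)=R(\{m^n_i\}^M,\ell)$.

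\textbf{Main obstacle.} The index bookkeeping in Step 2 is where care is needed: the off-by-one shifts between $\opT x^{i-1}$ and $m_i$, and the reverse indexing $N-i+1$. The fallback is to verify the $N=2,3$ matrices displayed earlier in the paper, which already exhibit this pattern. The only substantive hypothesis is that the Halpern coefficients satisfy the assumptions, and this is assumed in the corollary.
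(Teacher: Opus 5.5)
Your proposal is correct and follows the paper's route. The paper defines the Dual-Halpern iteration as the M-dual of Halpern and invokes Theorem~\ref{thm:m-dual}. You additionally carry out the coefficient identification $(m^j_j)^M=m^{N-j+1}$, $(m^j_i)^M=m^{N-i+1}-m^{N-i}$ for $1\le i<j$, and $(m^j_0)^M=1-m^N$ for $j\ge 1$, which the paper leaves implicit. Your index bookkeeping is right and agrees with the paper's $N=2,3$ tables.
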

Recent work \cite{bravo2026minimax} showed that the coefficients $m^0=0, m^n= \min \{1, \frac{1}{2\ell} (1+(\ell m^{n-1})^2)\}$ yield minimax optimality for Halpern iteration. 
The M-duality theorem implies that this choice also yields minimax optimality for Dual-Halpern iteration under Assumptions~\ref{assum:montone} and~\ref{assum:sim}, because the corollary implies that the corresponding Dual-Halpern iteration shares $R_N$ with Halpern iteration.

\section{A new method for efficiently reducing the residual}\label{sec4}

In Hilbert spaces, Halpern iteration is known to be optimal by matching the
corresponding lower bound \cite{pr2022}. In general normed spaces, surprisingly, however,
the M-dual representation suggests that a small modification of the boundary
coefficients can strictly improve the residual bound. Now, we introduce our novel algorithm \emph{Perturbed Dual-Halpern iteration}.

\paragraph{Perturbed Dual-Halpern iteration}
\begin{align*}
x^1
&=(1-m^N+\delta_1)x^0+(m^N-\delta_1)\opT x^0,\\
x^n
&=(1-m^N)x^0
+\sum_{i=0}^{n-2}(m^{N-i}-m^{N-i-1})\opT x^i
+m^{N+1-n}\opT x^{n-1} \qquad 2\le n\le N-1,\\
x^N
&=(1-m^N-\delta_2)x^0
+\sum_{i=0}^{N-2}(m^{N-i}-m^{N-i-1})\opT x^i
+(m^1+\delta_2)\opT x^{N-1}.
\end{align*}


The Perturbed Dual-Halpern iteration replaces $m^N$ by $m^N-\delta_1$ in the first iteration and $m^1$ by $m^1+\delta_2$ in the last iteration, with corresponding adjustments to the coefficients of $x^0$ and leaving the intermediate coefficient rows unchanged. As the following theorem shows, combining this method with Picard iteration improves the residual bound achieved by the minimax-optimal Halpern and Dual-Halpern iterations.




\begin{theorem}
\label{thm:perturbed-dual-halpern}

Let $N \ge 2$ and $\frac{5}{8}<\ell\le 1$.
Let $m^0=0$, 
\[m^n=\min\left\{1,\frac{1+(\ell m^{n-1})^2}{2\ell}\right\}, \qquad  n_*=\inf\{n\ge 1:m^{n+1}=1\}, \qquad N_*=\min\{n_*,N\}.\]
Let
 \[\delta_{2}= \frac{1-m^{N_*}}{8\ell }>0, \qquad \delta_1=\frac{3}{16\ell N_*(N_*+1)}>0.\]


Then, under Assumptions~\ref{assum:montone} and~\ref{assum:sim}, running the Perturbed Dual-Halpern iteration for $N_*$ iterations, followed by $N-N_*$ Picard iterations, yields
\[ R_N \le R_N^{\mathrm H} -\ell^{N-N_*+1}\left(\frac{\delta_2^2}{3}+\delta_1^2\right),\]
where $R_N^{\mathrm H}$ is the residual bound for Halpern iteration with anchoring factors $\{m^i\}^N_{i=1}$.
\end{theorem}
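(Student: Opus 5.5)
Our approach reduces the whole statement to a perturbation computation on the optimal transport recursion. Two preliminary reductions come first.

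\emph{The Picard tail.} The $N-N_*$ Picard iterations append rows with $m^n_n=1$. These rows trivially satisfy Assumptions~\ref{assum:montone} and~\ref{assum:sim}. For such a row, $x^n=\opT x^{n-1}$, and Proposition~\ref{lem:sim} gives $d_{n-1,n}=\ell d_{n-2,n-1}$. Since $R_n=\ell d_{n-1,n}$ for a Picard row, each Picard step multiplies the bound by exactly $\ell$, so $R_N=\ell^{N-N_*}R_{N_*}$.

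\emph{The Halpern tail.} The same holds for the Halpern iteration. By definition of $n_*$, $m^{n}=1$ for $n>N_*$, so $R^{\mathrm H}_N=\ell^{N-N_*}R^{\mathrm H}_{N_*}$.

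It therefore suffices to prove
\[
R_{N_*}^{\mathrm{PDH}}\le R^{\mathrm H}_{N_*}-\ell\Bigl(\tfrac{\delta_2^2}{3}+\delta_1^2\Bigr)
\]
with $N=N_*$. By the Halpern corollary, $R^{\mathrm H}_{N_*}$ equals the bound of the unperturbed Dual-Halpern iteration, so we compare the perturbed and unperturbed dual iterations directly.

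\textbf{Step 1 (assumptions and plan).} We check that the perturbed coefficients still satisfy Assumptions~\ref{assum:montone} and~\ref{assum:sim}. This should reduce to $\delta_1,\delta_2$ being small relative to the gaps $m^{n}-m^{n-1}$ and $1-m^{N_*}$, which is where $\ell>5/8$ enters. Under these assumptions, Proposition~\ref{lem:sim} gives explicit optimal plans for every $d_{i,j}$, so $R_{N_*}$ becomes a polynomial in $\delta_1,\delta_2$.

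\textbf{Step 2 (effect of $\delta_1$).} The first row is changed, and this affects $d_{0,j}=1-m^j_0$ and, through the recursion, all later $d_{i,j}$. To organize the bookkeeping we use Lemma~\ref{lem:diff-rec}, writing $R_N=m^N_0+\sum c_{i,j}f_{i,j}$ with $f_{i,j}=m^i_0-m^j_0$. Perturbing row $1$ by $\delta_1$ changes $f_{1,j}$ linearly and changes the plans $z^{1,l}$ in the $c$-recursion linearly. The product yields a term linear in $\delta_1$ plus a term quadratic in $\delta_1$.

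\textbf{Step 3 (effect of $\delta_2$).} The last-row perturbation is best handled through the dual picture. By Theorem~\ref{thm:m-dual}, perturbing the last row of the dual corresponds to perturbing the first step of Halpern, so its effect can be read off as in Step 2. Concretely, $R_N$ depends on $\delta_2$ through $m^N_0$, through $m^N_{N}$ in $R_N=\sum m^N_n\ell d_{n-1,N}$, and through the mass $m^i_i-\sum m^j_l$ in the plans $z^{i,N}$.

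\textbf{Step 4 (first-order terms and conclusion).} The key claim is that the first-order coefficients vanish or have a favourable sign. The Halpern sequence $m^n$ is exactly the minimax-optimal choice of \cite{bravo2026minimax}, so at $\delta=0$ the recursion $m^n=(1+(\ell m^{n-1})^2)/(2\ell)$ is a stationarity condition. This stationarity should kill the linear terms, or leave terms of the form $-\ell a\delta_i+\ell b\delta_i^2$ with explicit $a,b$. Choosing $\delta_2=(1-m^{N_*})/(8\ell)$ and $\delta_1=3/(16\ell N_*(N_*+1))$, which are roughly $a/(2b)$-type choices scaled down, gives a net decrease of at least $\ell\delta_2^2/3$ and $\ell\delta_1^2$. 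The cross term in $\delta_1\delta_2$ has to be shown nonpositive or absorbed.

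\textbf{Main obstacle.} We expect the hardest part to be obtaining usable closed forms for the linear coefficients in Steps 2 and 3, together with controlling the cross term. For the $\delta_1$ coefficient we would first try the known identity $\ell d_{n-1,n}^{\mathrm H}$-type formulas for Halpern, such as $R_n^{\mathrm H}=1-m^{n+1}$-style relations from \cite{bravo2026minimax}. We would also bound $\sum_j c_{1,j}$ by $O(1/(N_*(N_*+1)))$ using $m^n\ge$ a harmonic-type lower bound. This lower bound is what produces the $N_*(N_*+1)$ denominator.
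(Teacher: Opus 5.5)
Your two reductions match the paper: the Picard tail gives $R_N=\ell^{N-N_*}R_{N_*}$ for both methods, and the Halpern corollary lets you replace $R^{\mathrm H}_{N_*}$ by the unperturbed Dual-Halpern bound. One small slip: for the first Picard row, Proposition~\ref{lem:sim} gives $d_{n-1,n}=R_{n-1}$, not $\ell d_{n-2,n-1}$, though $R_n=\ell R_{n-1}$ still holds.

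The gap is that everything after these reductions is deferred, and the mechanism you propose in Step~4 points the wrong way. Both quadratic coefficients are nonnegative: exactly $+\ell$ for $\delta_2$, and $\ell\beta_N$ with $0\le\beta_N\le1$ for $\delta_1$. So if stationarity killed the linear terms, the perturbation could not decrease $R_N$ at all. Minimax optimality of $\{m^n\}$ holds only inside the Halpern family, and both perturbations leave that family. The theorem holds because the linear coefficients are strictly negative, and the proof is a quantitative \emph{lower} bound on their size.

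For the same reason, an $O(1/(N_*(N_*+1)))$ upper bound on $\sum_j c_{1,j}$ is the wrong direction. The harmonic-type estimate $x_k:=\ell m^k\ge (k+1)/(k+3)$ is used to show $X_{N-2}=\prod_{r=1}^{N-2}x_r\ge 6/(N(N+1))$, which is a lower bound on the $\delta_1$-slope.

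Step~3 is also incorrect. By the M-dual formulas, a last-row perturbation becomes $+\delta_2$ on the $\opT x^0$ coefficient of \emph{every} row of the dual (see the Dual Perturbed Dual-Halpern iteration). It is not a first-step perturbation, so it cannot be ``read off as in Step~2''.

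The missing ingredients are the following.

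\emph{The $\delta_2$ slope.} Only $m^N_0$ and $m^N_N$ change, so by Proposition~\ref{lem:sim} every $d_{i,N}$ increases by exactly $\delta_2\ell d_{-1,N-1}=\delta_2$. Hence $R_N(\delta_2,0)=R^{\mathrm H}_N-g_N\delta_2+\ell\delta_2^2$ exactly, with $g_N=1-\ell m^N-\ell d_{N-1,N}=1-\ell m^N-\ell^N\prod_{r=1}^N m^r$. One then proves $g_N\ge(1-m^N)/6$ by writing $g_N=(1-x_N)(1-P_N)$, where $P_n=X_n/(1-x_n)$ decreases from $P_2=5/6$. This uses $m^2<1$, i.e.\ $\ell>5/8$.

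\emph{The $\delta_1$ slope.} You need closed forms for the first-order sensitivities $u_{i,j}$ of $d_{i,j}$ and for the resulting slope $a_N$. The telescoping identity $a_{k+1}-x_{k-1}a_k=(1-x_{k-1}^2)^3/16$ then gives $a_N\ge X_{N-2}/16$. You also need the bound $\beta_N\le1$ on the quadratic term.

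\emph{The cross term.} It is handled by the same uniform-shift observation. Since $\delta_2$ shifts each $d_{i,N}$ by a constant, $u_{i,N}$ and $v_{i,N}$ are unchanged. The only interaction is through the last weight $x_1\mapsto x_1+\ell\delta_2$, which contributes $-(x_{N-1}+x_N-1)X_{N-2}\,\ell\delta_2\,\delta_1\le0$ because $x_{N-1}+x_N\ge x_1+x_2=9/8$.

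None of these computations appear in your proposal, so as it stands it is an outline rather than a proof.
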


\begin{proof}
First we consider $N$ such that $m^N<1$.

First, consider the case $\delta_1=0$. Assumptions~\ref{assum:montone} and~\ref{assum:sim} remain valid for $0\le\delta_2\le1-m^N$. Also, Proposition~\ref{lem:sim} gives
\[R_N(\delta_2,0)=R_N^{\mathrm H}-g_N\delta_2+\ell\delta_2^2,\]
where $g_N:=1-\ell m^N-\ell^N\prod_{r=1}^{N}m^r$ so the $N$th-iteration perturbation gives the decrease $g_N\delta_2-\ell\delta_2^2$.
Then, we can prove the following lemma.
\begin{lemma}
Let \(5/8<\ell\le 1\) and \(N\ge 2\), and assume that \(m^N<1\). Then
\[
g_N-\frac{1-m^N}{8}
\ge 0.
\]
\end{lemma}
\begin{proof}
Since $\frac{1+x_{n-1}^2}{2}$ with $x_0=0$ increases with $n$, by substituting $x_n:=\ell m^n < \ell $, we have
\[
x_n
=\frac{1+x_{n-1}^2}{2}.
\]

Let
\[
X_n:=\prod_{r=1}^n x_r,
\qquad
P_n:=\frac{X_n}{1-x_n}.
\]
Since
\[
1-x_{n+1}
=
1-\frac{1+x_n^2}{2}
=
\frac{(1-x_n)(1+x_n)}{2},
\]
we have, for \(2\le n<N\),
\[
\begin{aligned}
\frac{P_{n+1}}{P_n}
=
\frac{x_{n+1}(1-x_n)}{1-x_{n+1}} =
\frac{1+x_n^2}{1+x_n}
<1,
\end{aligned}
\]
where the last inequality follows from \(0<x_n<\ell\).

Moreover, since
\[
P_2
=
\frac{x_1x_2}{1-x_2}
=
\frac{\frac12\cdot\frac58}{1-\frac58}
=
\frac56,
\]
we have $P_N \le \frac{5}{6}$. Also,
\[
g_N
=
1-x_N-X_N
=
(1-x_N)(1-P_N)
\]
 implies
\[
\begin{aligned}
g_N-\frac{1-m^N}{6}
&=
(1-x_N)(1-P_N)-\frac{1-m^N}{6} \\
&=
(1-x_N)\left(\frac56-P_N\right)
 +\frac{1-x_N-(1-m^N)}{6} \\
&=
(1-x_N)\left(\frac56-P_N\right)
 +\frac{(1-\ell)m^N}{6}.
\end{aligned}
\]
Both terms on the right-hand side are nonnegative. Hence
\[
g_N\ge \frac{1-m^N}{6}.
\]
This proves the claim.
\end{proof}
Let $c=\ell\delta_2.$ Since $\ell\delta_1=3/[16N(N+1)]<x_1=\ell m^1$, we have $\delta_1<m^1\le m^N$. So $m_0^1=1-m^N+\delta_1\le1$, and a direct verification using the monotonicity of $\{m^k\}$ shows that the first-row perturbation also preserves Assumptions~\ref{assum:montone} and~\ref{assum:sim}.

Now, we consider the case  $\delta_2=0$. Since $\delta_1$ appears only in the first iteration, Proposition~\ref{lem:sim} gives
\[d_{i,j}(\delta_1)=d_{i,j}(0)+u_{i,j}\delta_1+\ell v_{i,j}\delta_1^2\]
where
\[u_{0,1}=-1,\quad u_{0,j}=0\ (j\ge2),\quad u_{1,2}=1-x_{N-1}-x_N,\quad u_{1,j}=1+x_{N-2}-x_{N-1}-x_N\ (j\ge3).\]
For $2\le i<j\le N$, comparison of the linear coefficients gives
\[u_{i,j}=\sum_{q=i+1}^{j-1}(x_{N-q+1}-x_{N-q})u_{i-1,q-1}+x_{N-j+1}u_{i-1,j-1}.\]
A direct induction yields
\[
u_{i,j}=
\begin{cases}
\displaystyle \left(\prod_{r=N-i}^{N-2}x_r\right)(1-x_{N-1}-x_N),&j=i+1,\\[6pt]
\displaystyle \left(\prod_{r=N-i}^{N-2}x_r\right)(1+x_{N-i-1}-x_{N-1}-x_N),&j\ge i+2.
\end{cases}
\]
Let $a_N$ denote the negative linear coefficient of $R_N(0,\delta_1)$. Substitution of the preceding formula into the final residual gives
\[a_N=\sum_{t=1}^{N-1}(x_t-x_{t-1})\left(\prod_{r=t}^{N-2}x_r\right)(x_{N-1}+x_N-1-x_{t-1}).\]
For $k\ge2$, define $a_k$ by the same formula with $N$ replaced by $k$. A direct telescoping calculation using $x_{k+1}=(1+x_k^2)/2$ gives
\[a_2=\frac{1}{16},\qquad a_{k+1}-x_{k-1}a_k=\frac{(1-x_{k-1}^2)^3}{16}\ge0.\]
Consequently,
\[a_N\ge a_2\prod_{r=1}^{N-2}x_r=\frac{X_{N-2}}{16}.\]

We now restore the $N$th-row perturbation. In the recurrence of Proposition~\ref{lem:sim}, this perturbation adds $\delta_2\ell d_{-1,N-1}=\delta_2$ to every $d_{i,N}$. Hence the coefficients $u_{i,N}$ and $v_{i,N}$ are unchanged. The only additional $\delta_1$-dependent contribution to the final residual comes from the change of the last weight $x_1\mapsto x_1+c$. Since
\[u_{N-1,N}=X_{N-2}(1-x_{N-1}-x_N),\]
we obtain
\[R_N(\delta_2,\delta_1)=R_N(\delta_2,0)-\alpha_N\delta_1+\ell\beta_N\delta_1^2,\]
where
\[\alpha_N=a_N+(x_{N-1}+x_N-1)X_{N-2}c.\]
Since $N\ge2$ and $\{x_k\}$ is increasing, we have
$x_{N-1}+x_N\ge x_1+x_2=9/8>1$. Consequently,
\[\alpha_N\ge\frac{X_{N-2}}{16}.\]
It remains to bound the quadratic coefficient. We have $v_{0,j}=0$, $v_{1,2}=1$, and $v_{1,j}=0$ for $j\ge3$. For $i\ge2$, comparison of the quadratic coefficients gives
\[v_{i,j}=\sum_{q=i+1}^{j-1}(x_{N-q+1}-x_{N-q})v_{i-1,q-1}+x_{N-j+1}v_{i-1,j-1}.\]
The coefficients are nonnegative and their sum is $x_{N-i}\le1$. Hence induction gives $0\le v_{i,j}\le1$, and therefore
\[0\le\beta_N\le\sum_{i=1}^{N-2}(x_{N-i}-x_{N-i-1})+(x_1+c)=x_{N-1}+c.\]
Since $c\le\ell-x_N$ and $x_{N-1}\le x_N$, we obtain
\[0\le\beta_N\le\ell\le1.\]

Finally, induction gives
\[x_k\ge\frac{k+1}{k+3}.\]
Thus,
\[X_{N-2}\ge\prod_{k=1}^{N-2}\frac{k+1}{k+3}=\frac{6}{N(N+1)}.\]
Combining the preceding bounds gives
\[R_N(\delta_2,\delta_1)\le R_N(\delta_2,0)-\frac{3}{8N(N+1)}\delta_1+\ell\delta_1^2.\]
With $\delta_1=3/[16\ell N(N+1)]$, we obtain
\[R_N(\delta_2,0)-R_N(\delta_2,\delta_1)\ge\frac{9}{256\ell N^2(N+1)^2}.\]
Combining this decrease with the $N$th-row decrease proves the claim.

If $N>N^*$ we run Picard, and have $\ell^{N-N^*}R_{n^*}$. Since the minimax-optimal Halpern iteration also turns into Picard after $N^*$ iterations and $R^H_n=\ell^{N-N^*}R^H_{N^*}$ \cite[Section~2.2]{bravo2026minimax}, this concludes the proof.
\end{proof}

For the non-expansive case $\ell=1$, we have $N_*=N$, and thus the Perturbed Dual-Halpern iteration does not reduce to the Picard iteration.

We note that \cite{cc2023} conducted numerical optimization for designing Mann iterations utilizing the optimal transport metrics framework and found numerical evidence that the Halpern iteration is not an optimal method in general normed spaces. However, to the best of our knowledge, no analytic method has been proposed that achieves a residual bound equal to or strictly better than that of the Halpern iteration. Our Perturbed Dual-Halpern iteration is the first analytic algorithm to exhibit a strictly smaller worst-case residual bound than the Halpern iteration as stated in Theorem~\ref{thm:perturbed-dual-halpern}.

Lastly, by the M-duality theorem, we directly obtain the M-dual of Perturbed Dual-Halpern, which has the same residual bound.

\paragraph{Dual Perturbed Dual-Halpern}

\begin{align*}
x^1&=(1-m^1-\delta_2)x^0 + (m^1+\delta_2) \opT x^0\\
     x^{n} &=(1-m^n-\delta_2)x^0 +\delta_2 \opT x^0+m^{n} \opT x^{n-1} \qquad 2\le n\le N-1,
    \\x^N &=  (1-m^N-\delta_2)x^0 +\delta_2 \opT x^0+\delta_1 \opT x^{N-2}+(m^{N}-\delta_1) \opT x^{N-1}. 
\end{align*}

\begin{corollary}
 Under Assumptions~\ref{assum:montone} and~\ref{assum:sim}, the Perturbed Dual-Halpern iteration and the corresponding Dual Perturbed Dual-Halpern iteration have the same  residual bound $R_N$.
\end{corollary}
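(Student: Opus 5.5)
The corollary follows from Theorem~\ref{thm:m-dual} once two things are checked. First, the Dual Perturbed Dual-Halpern coefficients must be exactly the M-dual coefficients $\{(m_i^j)^M\}$ of the Perturbed Dual-Halpern coefficients. Second, the hypotheses of Theorem~\ref{thm:m-dual} must hold for the original array. By Lemma~\ref{m_dual_coef}, the second point also gives Assumptions~\ref{assum:montone} and~\ref{assum:sim} for the dual array, although the theorem only needs them for one side. Since $\bigl((m_i^j)^M\bigr)^M=m_i^j$, it does not matter which of the two iterations is treated as primal. I will take the Perturbed Dual-Halpern iteration as primal, because its coefficient array is easier to read off.

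\textbf{Step 1: identify the primal array.} Write the Perturbed Dual-Halpern array $\{p_i^n\}$ in the Mann form, where $p_i^n$ is the coefficient of $\opT x^{i-1}$ in $x^n$:
\begin{itemize}
\item row $1$ is $p_1^1=m^N-\delta_1$;
\item for $2\le n\le N-1$, $p_n^n=m^{N+1-n}$ and $p_{i+1}^n=m^{N-i}-m^{N-i-1}$ for $0\le i\le n-2$;
\item row $N$ is as in the display, with last entry $p_N^N=m^1+\delta_2$;
\item $p_0^n$ is fixed by the row sums.
\end{itemize}
Hypothesis check: Assumptions~\ref{assum:montone} and~\ref{assum:sim} hold for this array by the argument in the proof of Theorem~\ref{thm:perturbed-dual-halpern}, with $0\le\delta_2\le 1-m^N$ and $\delta_1<m^1$. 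Monotonicity of $\{m^k\}$ handles the unperturbed rows.

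\textbf{Step 2: compute the dual entries.} The dual entries come from the formula
\[(p_i^j)^M=\sum_{k=0}^{j-i}p^{N-i+1}_{N-j+1+k}-\sum_{k=0}^{j-i-1}p^{N-i+1-1}_{N-j+1+k},\]
and most of the work is telescoping. For an unperturbed row $r$, the partial sum $\sum_{k\ge s}p^r_k$ is a tail of the Dual-Halpern row, namely $m^{N+1-s}$ for $s\ge1$. This follows because the differences $m^{N-i}-m^{N-i-1}$ telescope. Hence a generic entry becomes
\[(p_i^j)^M=m^{N-(N-j+1)+1}-m^{N-(N-j+1)+1}=m^{j}-m^{j}\quad\text{or}\quad m^j,\]
according to whether $i<j$ or $i=j$. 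This recovers the Halpern array, consistent with the Dual-Halpern corollary.

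\textbf{Step 3: track the perturbations.}
\begin{itemize}
\item $\delta_2$ sits in row $N$ at index $N$. Every tail sum of row $N$ starting at $s\ge 1$ therefore gains $+\delta_2$, except that the $x^0$ coefficient absorbs $-\delta_2$. In the dual formula, row $N$ appears only when $i=1$. So $(p_1^j)^M$ gains $+\delta_2$ for every $j$: the diagonal $(p_1^1)^M=m^1+\delta_2$, and $\delta_2\opT x^0$ in rows $j\ge2$. Row sums then force $(p_0^j)^M=1-m^j-\delta_2$.
\item $\delta_1$ sits in row $1$, index $1$. Row $1$ appears in the dual formula only when $N-i+1=1$ or $N-i=1$, that is, $i=N$ or $i=N-1$, which forces $j=N$. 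This gives $(p_N^N)^M=p_1^1=m^N-\delta_1$ and $(p_{N-1}^N)^M=p^2_{1}+p^2_2-p^1_1$. After substitution this equals $(m^N-m^{N-1})+m^{N-1}-(m^N-\delta_1)=\delta_1$, matching the stated coefficients of $\opT x^{N-1}$ and $\opT x^{N-2}$.
\end{itemize}

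The main obstacle is this bookkeeping for the corner entries: where rows $1$, $2$ and $N$ interact, especially for small $N$ such as $N=2$, where $\delta_1$ and $\delta_2$ can land in the same row. In that case I would verify the $2\times2$ and $3\times3$ examples directly against the displayed matrices. Once the arrays are matched, Theorem~\ref{thm:m-dual} gives the equality of the residual bounds $R_N$.
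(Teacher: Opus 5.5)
Your proposal is correct and takes the same route as the paper: the paper simply asserts that the Dual Perturbed Dual-Halpern iteration is the M-dual of the Perturbed Dual-Halpern iteration and invokes Theorem~\ref{thm:m-dual}, and your telescoping computation supplies the coefficient check the paper leaves implicit. The $N=2$ case you deferred also goes through: there $(p_1^2)^M=p_1^2+p_2^2-p_1^1=(m^2-m^1)+(m^1+\delta_2)-(m^2-\delta_1)=\delta_1+\delta_2$, which is the displayed coefficient once $\delta_2\opT x^0$ and $\delta_1\opT x^{N-2}=\delta_1\opT x^0$ are combined.
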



\section{Conclusion}\label{sec13}
In this work, we establish an M-duality theorem for Lipschitz operators in general normed spaces within the optimal transport metric framework. One direction for future research is to relax the coefficient conditions in Assumption~\ref{assum:montone} or Assumption~\ref{assum:sim}. Another is to extend the theory to operators with Lipschitz constants $\ell\in(1,\infty)$ and investigate M-duality in the context of the minimal displacement problem~\cite{g1973, d2025, bravo2026minimax}.

Just as H-duality has motivated new algorithms and convergence analyses in Hilbert spaces, we hope that our M-duality theorem will open new directions in general normed spaces. In particular, as our construction of the Perturbed Dual-Halpern iteration has shown, we believe that M-duality could guide the design of algorithms with sharper residual bounds and ultimately help identify optimal fixed-point methods in general normed spaces.


\newpage









\bibliography{sn-bibliography}

\end{document}